\RequirePackage{amsthm}
\documentclass[sn-mathphys-num]{sn-jnl}

\usepackage{graphicx}%
\usepackage{multirow}%
\usepackage{amsmath,amssymb,amsfonts}%
\usepackage{mathrsfs}%
\usepackage[title]{appendix}%
\usepackage{xcolor}%
\usepackage{textcomp}%
\usepackage{manyfoot}%
\usepackage{booktabs}%
\usepackage{algorithm}%
\usepackage{algorithmicx}%
\usepackage{algpseudocode}%
\usepackage{listings}%
\usepackage{anyfontsize}

\theoremstyle{thmstyleone}%
\newtheorem{theorem}{Theorem}
\newtheorem{proposition}{Proposition}

\theoremstyle{thmstyletwo}%
\newtheorem{remark}{Remark}%

\theoremstyle{thmstylethree}%
\newtheorem{lemma}{Lemma}
\newtheorem{corollary}{Corollary}

\usepackage{comment}

\begin{document}

\title[Limit theorems for a class of random outer measures in infinite urn schemes]{Limit theorems for a class of random outer measures in infinite urn schemes}


\author[1,2]{\fnm{Berhane} \sur{Abebe}}\email{b.andemikael@g.nsu.ru}

\author*[3]{\fnm{Mikhail} \sur{Chebunin}}\email{mikhail.chebunin@uni-ulm.de}

\author[4,1]{\fnm{Artyom} \sur{Kovalevskii}}\email{artyom.kovalevskii@gmail.com}

\affil[1]{ \orgname{Novosibirsk State University}, \orgaddress{\street{Pirogova str., 1}, \city{Novosibirsk}, \postcode{630090}, \state{Novosibirsk Oblast}, \country{Russian Federation}}}

\affil[2]{ \orgname{Mainefhi College of Science}, \orgaddress{\city{Mainefh}, \postcode{}, \state{Zoba Maekel}, \country{Eritrea}}}

\affil[3]{ \orgname{Ulm University}, \orgaddress{\city{Ulm}, \postcode{89081},  \country{Germany}}}

\affil[4]{ \orgname{Sobolev Institute of Mathematics}, \orgaddress{\street{Koptjug ave., 4}, \city{Novosibirsk}, \postcode{630090}, \state{Novosibirsk Oblast}, \country{Russian Federation}}}


\abstract{
An urn scheme is a probabilistic model in which balls are placed into urns sequentially and independently of each other. All balls share the same probability distribution for hitting the urns. In the simplest case, there is a finite number of urns and the probabilities of hitting each urn are equal. In an infinite urn scheme, there is a countable number of urns, and the hitting probabilities form a probability mass function on the set of urn labels, so they depend on the urn number. 

The statistics of interest are the number of urns with at least \(k\ge 1\)  balls after throwing \(n\ge 1\) balls. Thus, we assume that there is a countable family of urns, and we fix the probabilities for a ball to hit each urn (the same for all balls). For an arbitrary subset \(A\) of the unit interval \([0,1]\), we do not consider all ball indices from \(1\) to \(n\), but only those that belong to the set \(nA\), and we study the number of urns with at least \(k\) balls after throwing the balls with indices in \(nA\). This number is non-negative, and if the set \(A\) is empty, this number is equal to zero. 
Moreover, if \(k=1\), then it satisfies the property of countable subadditivity: if \(A\) is contained in a countable union of sets, then the number of non-empty urns after throwing balls with indices in \(nA\) does not exceed the sum of the corresponding numbers of non-empty urns defined in the same way for each of these sets. Hence, the number of non-empty urns for ball indices in \(nA\), where \(A\) is an arbitrary subset of the unit interval, satisfies all the axioms of an outer measure on the unit interval. 

We study the properties of the statistics of interest. Our main result is a functional central limit theorem for sets $A$ consisting of finite unions of intervals and parameterized by their boundary points. We discuss applications of this theorem to elementary probabilistic models of text.}

\keywords{Infinite urn scheme, random outer measure, functional central limit theorem, strong law of large numbers.}


\pacs[MSC Classification]{60F17, 60G57}

\maketitle

\section{Introduction}\label{sec1}

Let \(\{X_i\}_{i \geq 1}\) be a family of unbounded, positive integer-valued, independent and identically distributed (i.i.d.) random variables. Consider the \(\sigma\)-algebra generated by the events
\begin{equation}\label{sigma_algebra}
\{X_i = X_j\}_{1 \leq i,j \leq n}, \quad n \geq 1.
\end{equation}

Typical random variables (statistics) that are measurable with respect to this \(\sigma\)-algebra include the number of distinct values \(R_n\) among \(X_1, \dots, X_n\); the number of values occurring exactly \(k\) times \(R_{n,k}\) or at least \(k\) times \(R_{n,k}^*\) for \(k \geq 1\); and similar quantities. We denote by 
\begin{equation}\label{p_i}
p_i := \mathbb{P}(X_1 = i) > 0, \quad i \geq 1.
\end{equation}
Since these statistics are measurable with respect to the \(\sigma\)-algebra generated by the events in \eqref{sigma_algebra}, they depend only on the empirical multiplicity measure and not on the order of the \(X_i\). Thus, we may assume without loss of generality that the probabilities are ordered decreasingly:
\begin{equation}\label{order}
p_i \geq p_{i+1}, \quad i \geq 1.
\end{equation}

Traditionally, this probabilistic model and its statistics are called the infinite urn model or Karlin's occupancy scheme. Samuel Karlin's key role in developing methods for this model is described in detail below. In terms of the infinite urn scheme, the random variable \(X_i\) is the label of the urn hit by the \(i\)-th ball. The number \(R_n\) of distinct values among the first \(n\) random variables is the number of non-empty urns after throwing \(n\) balls; the number \(R^*_{n,k}\) of values occurring at least \(k\) times is the number of urns containing at least \(k\) balls; and so on. 

In the 20th century, there were few papers on infinite urn models. Bahadur~\cite{Bahadur} and Karlin~\cite{Karlin} obtained the basic results. Bahadur was probably the first to study this model: he derived the asymptotic behavior of the expected number of non-empty urns in a particularly important special case and proved a law of large numbers for this quantity. Karlin introduced the regularity assumption 
\begin{equation}\label{reg}
\alpha(x) := \max\left\{i > 0 : p_i \geq 1/x\right\} = x^\theta L(x), \quad \theta \in [0,1],
\end{equation}
where \(L(x)\) is a slowly varying function. Karlin proved the central limit theorem (CLT) for a wide class of statistics under \eqref{reg} with \(\theta > 0\). An important idea of Karlin was Poissonization, i.e., replacing the fixed number \(n\) of random variables by a Poisson number \(\Pi(t)\), \(t \geq 0\), independent of the sequence \(\{X_i\}_{i \geq 1}\). This makes the occupancies of different urns independent Poisson random variables. Karlin also established the strong law of large numbers (SLLN). 

Darling~\cite{Darling} and Key~\cite{Key1992,Key1996} rediscovered the model. Darling obtained results on the asymptotic behavior of the number of non-empty urns and the number of urns with an odd number of balls. Key investigated the asymptotics of the number of singleton urns (those containing exactly one ball). Dutko~\cite{dutko1989central} extended Karlin's research and was the first to prove a central limit theorem for a subclass of distributions satisfying \eqref{reg} with \(\theta = 0\).

In the 21st century, research on Karlin's occupancy scheme has developed intensively in many directions. Reviews of interesting properties of its statistics can be found in \cite{GnedinHansenPitman2007, Ben}. Barbour~\cite{Barbour2009} proposed a translated Poisson approximation for the number of urns containing exactly \(k\) balls (the so-called small counts). The Markov chain properties of these statistics were studied in \cite{muratov}. 

Conditions under which the variance of the number of non-empty urns tends to infinity were investigated in \cite{BarbourGnedin2009}. For a wide class of such distributions, a central limit theorem was proved in \cite{GnedinHansenPitman2007, BarbourGnedin2009, Chang}. Under regularity conditions, functional central limit theorems (FCLT) have been established in \cite{DurieWang2016}, \cite{Chebunin2016}, and \cite{ChebuninZuyev2022}. Durieu 
$\&$ Wang~\cite{Wang} proved a functional central limit theorem for weighted occupancy processes in the Karlin model. 

A law of the iterated logarithm (LIL) for small counts was proved in \cite{iksanov_kotelnikova_2024} under \eqref{reg}, both for \(\theta > 0\) and for \(\theta = 0\) under an additional assumption. Buraczewski et al.~\cite{Buraczewski} established an LIL for the number of occupied urns and related quantities. 

More complex models based on Karlin's occupancy scheme include those with randomly generated probabilities \cite{De_Blasi, Iksanov2022}, and Markov chain-driven infinite urn schemes \cite{Grabchak}. 

In this paper, we consider the restriction of the original \(\sigma\)-algebra \eqref{sigma_algebra} to the \(\sigma\)-algebra generated by the events
\[
\{X_i = X_j\}_{i,j \in \mathbb{Z}_+ \cap nA}, \quad A \subseteq [0,1],
\]
where \(A\) is an arbitrary subset of the unit interval \([0,1]\).

The main object of study is \(R_{nA,k}^*\), the number of distinct urns hit by at least \(k\) balls whose indices lie in \(nA\). We show that for any Karlin model, \(R_{n\cdot,1}^*\) is an integer-valued random outer measure: it vanishes on the empty set \(A = \emptyset\) and satisfies countable subadditivity. For \(k > 1\), \(R_{n\cdot,k}^*\) is no longer subadditive but is majorized by \(R_{n\cdot,1}^*\). 

We then introduce the Poissonization \(R_{tA,k}^{\Pi,*}\) of \(R_{nA,k}^*\), defined as the number of urns containing at least \(k\) balls when the balls are thrown at the points of a standard Poisson process restricted to \(tA\), for \(t > 0\). Here \(A\) is a Borel set, so the Poisson process on \(tA\) is well defined. In this case, \(R_{t\cdot,1}^{\Pi,*}\) is the restriction of a discrete outer measure to the Borel \(\sigma\)-algebra, and the monotonicity \(R_{t\cdot,k+1}^{\Pi,*} \leq R_{t\cdot,k}^{\Pi,*}\) holds for all \(k \geq 1\). 

Lemma~\ref{L3} summarizes these weak properties of \(R_{nA,k}^*\) and its Poissonization. The distribution of \(R_{nA,k}^*\) depends only on the cardinality of \(nA\) and not on the specific locations of its points. Moreover, \(\mathrm{Var}(R_{nA,k}^*) \leq \mathbb{E} R_{nA,k}^*\), a property noted by Bahadur~\cite{Bahadur}, which implies the law of large numbers. The same holds for the Poissonized version. 

Importantly, \(R_{nA,k}^*\) is not monotone in \(n\), even when \(A\) is an interval such as \(A = (a,b]\) with \(0 < a < b\). Moreover, it lacks the difference property and satisfies only the following inequality \(R_{n(a,b],k}^* \geq R_{n[0,b],k}^* - R_{n[0,a],k}^*\). Monotonicity of processes played a key role in proofs of limit theorems starting with Karlin's CLT and SLLN; these methods do not apply to \(R_{nA,k}^*\). 

We prove the central limit theorem for the Poissonized version (Theorem~\ref{CLT}) under the regularity assumption \eqref{reg} for \(\theta \in (0,1)\). This is a ``regular'' multivariate central limit theorem for statistics over a finite collection of arbitrary Borel sets. 

Next, we establish the functional central limit theorem  (Theorem~\ref{FCLT}). This theorem asserts weak convergence, uniform in the interval endpoints, of the standardized processes \(R_{nA,k}^*\) for sets \(A\) consisting of finitely many intervals to a corresponding Gaussian field. The proof relies on estimating the probability of symmetric set differences for the Poissonized version and tightly approximating the original multivariate process by its Poissonization. The corollaries include an analog of Theorem~\ref{CLT} for the non-Poissonized statistics and a result for weighted sum statistics. In particular, Corollary~\ref{Q} generalizes Theorem~1 of \cite{Wang} to a broader class of distributions and multidimensional parameters. 

We also prove the strong law of large numbers under the regularity condition for sets consisting of finitely many intervals. Finally, we discuss applications to elementary probabilistic text models. Modern mathematical linguistics models texts as stochastic processes over an infinite vocabulary rather than points in a finite feature space. Statistics based on the number of distinct words enable diagnostics for concatenation in heterogeneous texts via a test grounded in the  FCLT. 

The paper is organized as follows. We define notations and study basic properties of the statistics in Section~\ref{sec2}. The CLT for Poissonization is proved in Section~\ref{sec3}. The FCLT (Theorem~\ref{FCLT}) is proved in Section~\ref{sec4}. The SLLN is established in Section~\ref{sec5}. Applications to probabilistic text models appear in Section~\ref{sec6}.

\section{Preliminaries}\label{sec2}

Let $A\subseteq [0,\, 1]$ be an arbiratry subset, and $tA=\{ta: \ a \in A\}$ for $t\ge0$. Recall that $\{ X_i\}_{i\ge 1}$ is a family of i.i.d. unbounded random variables on $\mathbb{Z}_+ = \{1,\ 2, \ldots\}$.

Let $R_{nA,k}^*$ be the number of urns that contain at least $k\ge 1$ balls with their sequential numbers in $nA$ for $n\ge 1$,
\begin{equation}
    R_{nA,k}^*:=\sum_{i=1}^{\infty} {\bf 1}\left(\exists \,  m_1^{(i)}<\ldots<m_k^{(i)} \in nA: \   X_{m_1^{(i)}}=\ldots=X_{m_k^{(i)}}=i \right). \nonumber
\end{equation}

We also use special designations $R_n$ and $R^*_{n,k}$ for the case when $A=[0,\, 1]$
\begin{equation}
  R_n := R_{n[0,\,1],1}^*, \ \ R^*_{n,k}:= R_{n[0,\,1],k}^*. \nonumber  
\end{equation}

\begin{lemma}\label{L1}
Let \(n,k \geq 1\) and \(A, A_i\subseteq[0,1]\) for \(i\ge 1\). Then with probability 1
\begin{itemize}
\item[(i)] \(R_{nA,k}^* \geq 0\) and \(R_{n\emptyset,k}^* = 0\).

\item[(ii)] If \(A \subseteq \bigcup_{i=1}^\infty A_i\), then \(R_{nA,1}^* \leq \sum_{i=1}^\infty R_{nA_i,1}^*\).

\item[(iii)] \(R_{nA,k+1}^* \leq R_{nA,k}^* \leq R_{n,k}^* \leq n/k\).
\end{itemize}
\end{lemma}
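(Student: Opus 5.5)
The plan is to argue pathwise: fix an outcome $\omega$ and write $R_{nA,k}^*$ as a sum of indicators $\mathbf{1}(N_i(nA)\ge k)$, where
\[
N_i(nA):=\#\{m\in \mathbb{Z}_+\cap nA:\ X_m=i\}
\]
is the number of balls with indices in $nA$ that land in urn $i$. Indeed, the existence of indices $m_1^{(i)}<\ldots<m_k^{(i)}$ in $nA$ with $X_{m_j^{(i)}}=i$ is the same as $N_i(nA)\ge k$. Every claim then reduces to an elementary statement about these counts, which hold for every $\omega$, hence with probability 1. Since $A\subseteq[0,1]$, the set $\mathbb{Z}_+\cap nA$ is contained in $\{1,\ldots,n\}$, so all sums involved are effectively finite.

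For (i), nonnegativity is immediate because $R_{nA,k}^*$ is a sum of indicators. If $A=\emptyset$, then $N_i(n\emptyset)=0<k$ for all $i$, so every indicator vanishes.

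For (ii), note that $\mathbf{1}(N_i(nA)\ge 1)$ means there is $m\in\mathbb{Z}_+\cap nA$ with $X_m=i$. Since $A\subseteq\bigcup_j A_j$, we have $m/n\in A_{j}$ for some $j$, so $m\in nA_j$ and hence $N_i(nA_j)\ge 1$. This gives the indicator inequality
\[
\mathbf{1}(N_i(nA)\ge 1)\le \sum_{j}\mathbf{1}(N_i(nA_j)\ge 1).
\]
Summing over $i$ and exchanging the two sums, which is legitimate because all terms are nonnegative (Tonelli), yields the countable subadditivity.

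For (iii), the three inequalities are handled one at a time.
\begin{itemize}
\item The first, $R_{nA,k+1}^*\le R_{nA,k}^*$, follows because $N_i\ge k+1$ implies $N_i\ge k$.
\item The second, $R_{nA,k}^*\le R_{n,k}^*$, follows from $nA\subseteq n[0,1]$, which gives $N_i(nA)\le N_i(n[0,1])$.
\item For the last, $R_{n,k}^*\le n/k$, we use $\sum_i N_i(n[0,1])=n$, since each of the balls $1,\ldots,n$ lands in exactly one urn. Each urn counted in $R_{n,k}^*$ contributes at least $k$ to this sum, so $k R_{n,k}^*\le n$.
\end{itemize}

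There is no real obstacle here. The only point needing care is the interchange of summation in (ii) and the observation that $\mathbb{Z}_+\cap nA\subseteq\{1,\ldots,n\}$. Together these ensure all quantities are finite and the argument is purely deterministic.
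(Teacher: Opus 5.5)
Your proof is correct and follows the same pathwise argument as the paper. In (ii), your indicator inequality followed by Tonelli is the counting form of the paper's step that the urns hit from $nA$ lie in the union of the urns hit from the $nA_j$, together with countable subadditivity of cardinality; in (iii), you use the same nesting of events in $k$ and the bound $kR_{n,k}^*\le n$, and you also spell out the middle inequality $R_{nA,k}^*\le R_{n,k}^*$ via $N_i(nA)\le N_i(n[0,1])$, which the paper leaves implicit.
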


\begin{proof}
Property (i) follows immediately from the definition. If \(nA \subseteq \bigcup_{i=1}^\infty nA_i\), then every ball index in \(nA\) belongs to some \(nA_i\). Thus, every urn hit at least once by balls from \(nA\) is hit at least once by balls from some \(nA_i\), so the set of such urns for \(nA\) is contained in the union over \(i\) of the sets of urns for each \(nA_i\). Cardinality is countably subadditive, which yields (ii). Property (iii) holds because the event ``urn contains at least \(k+1\) balls from \(nA\)'' is contained in ``at least \(k\) balls from \(nA\)''; combined with the previous inequality and a simple bounding \(R_{n,k}^*\) (at most \(n\) balls into urns with $\geq k$ each). 
\end{proof}

\begin{remark}\label{R1}
By Lemma~\ref{L1}(i)--(ii), for each fixed \(n \geq 1\), the map \(A \mapsto R_{nA,1}^*\) is a random integer-valued outer measure on \([0,1]\). This fails for \(R_{n\cdot,k}^*\) when \(k > 1\), as (ii) need not hold. Indeed, for \(n=2\) and \(A = [0,1] = A_1 \cup A_2\) with \(A_1 = [0,1/2]\), \(A_2 = (1/2,1]\), if both balls 1 and 2 fall into the same urn, then \(R_{nA,2}^* = 1\) while \(R_{nA_1,2}^* = R_{nA_2,2}^* = 0\). 
\end{remark}

Let \(\{\Pi(t), t \geq 0\}\) be a Poisson process with intensity 1, independent of the family \(\{X_i\}_{i \geq 1}\). Let \(0 < T_1 < T_2 < \ldots\) denote its points (arrival times). We define the Poissonized analog of \(R_{nA,k}^*\) by
\begin{equation}
R_{tA,k}^{\Pi,*} := \sum_{i=1}^\infty \mathbf{1}\Bigl( \exists \, T_{m_1^{(i)}} < \cdots < T_{m_k^{(i)}} \in tA : X_{m_1^{(i)}} = \cdots = X_{m_k^{(i)}} = i \Bigr)\nonumber
\end{equation}
for \(t > 0\). When \(A = [0,1]\), we write \(R_{\Pi(t)} := R_{t[0,1],1}^{\Pi,*}\) and \(R_{t,k}^{\Pi,*} := R_{t[0,1],k}^{\Pi,*}\).

The Poisson process is well-defined on \(tA\) provided \(A\) is a Borel set. Let \(\mathcal{B}[0,1]\) denote the Borel \(\sigma\)-algebra on \([0,1]\). 

\begin{lemma}\label{L2}
Let \(t > 0\), \(k \geq 1\), and \(A, A_i \in \mathcal{B}[0,1]\) for $i\ge 1$. Then with probability 1
\begin{itemize}
\item[(i)] \(R_{tA,k}^{\Pi,*} \geq 0\) and \(R_{t\emptyset,k}^{\Pi,*} = 0\).

\item[(ii)] If \(A \subseteq \bigcup_{i=1}^\infty A_i\), then \(R_{tA,1}^{\Pi,*} \leq \sum_{i=1}^\infty R_{tA_i,1}^{\Pi,*}\).

\item[(iii)] \(R_{tA,k+1}^{\Pi,*} \leq R_{tA,k}^{\Pi,*} \leq R_{t,k}^{\Pi,*} \leq \Pi(t)/k\).
\end{itemize}
\end{lemma}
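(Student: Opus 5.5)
The plan is to argue pathwise, on a fixed realization of the Poisson process \(\{\Pi(t)\}\) and the sequence \(\{X_i\}_{i\ge 1}\), exactly as in the proof of Lemma~\ref{L1}. The only change is that ball indices \(m\in nA\) are replaced by arrival times \(T_m\in tA\). For each urn \(i\) and Borel \(B\subseteq[0,1]\) let
\[
N_i(tB):=\#\{m\ge 1:\ T_m\in tB,\ X_m=i\}.
\]
Then \(R_{tB,k}^{\Pi,*}=\sum_i \mathbf{1}(N_i(tB)\ge k)\). First I fix a probability-one event on which \(\Pi(t)<\infty\) for every \(t\) (the arrival times \(T_m\) are strictly increasing and tend to infinity). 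On this event only finitely many points lie in \(t[0,1]\), so every sum below has finitely many nonzero terms and all quantities are finite. All inequalities will be proved on this event.

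For (i), nonnegativity is immediate because \(R_{tA,k}^{\Pi,*}\) is a sum of indicators. If \(A=\emptyset\), then \(tA=\emptyset\) contains no arrival times, so every \(N_i\) vanishes and the sum is \(0\).

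For (ii), suppose \(A\subseteq\bigcup_j A_j\). Then \(tA\subseteq\bigcup_j tA_j\). Take an urn \(i\) counted in \(R_{tA,1}^{\Pi,*}\). It has some \(T_m\in tA\) with \(X_m=i\), and that \(T_m\) lies in some \(tA_j\), so urn \(i\) is also counted in \(R_{tA_j,1}^{\Pi,*}\). This gives the pointwise indicator inequality
\[
\mathbf{1}(N_i(tA)\ge1)\le\sum_j\mathbf{1}(N_i(tA_j)\ge1).
\]
Summing over \(i\) and exchanging the order of summation (Tonelli, since all terms are nonnegative) yields the claim.

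For (iii), the first inequality follows from \(\mathbf{1}(N\ge k+1)\le\mathbf{1}(N\ge k)\). The second follows from \(A\subseteq[0,1]\), which gives \(N_i(tA)\le N_i(t[0,1])\) and hence \(\mathbf{1}(N_i(tA)\ge k)\le\mathbf{1}(N_i(t[0,1])\ge k)\). For the last inequality, use
\[
k\,R_{t,k}^{\Pi,*}\le\sum_i N_i(t[0,1])=\Pi(t),
\]
because each counted urn holds at least \(k\) of the \(\Pi(t)\) points in \([0,t]\).

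I expect no real obstacle here. The only point needing care is measurability and the almost-sure qualifier: \(tA\) must be Borel so that the event \(\{T_m\in tA\}\) is measurable, and we need \(\Pi(t)<\infty\) almost surely. Both are guaranteed by \(A\in\mathcal{B}[0,1]\) (scaling preserves Borel sets) and by the standard properties of the Poisson process.
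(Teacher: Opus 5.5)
Your proposal is correct and follows the same route as the paper, whose proof just transfers the pathwise argument of Lemma~\ref{L1} by replacing the ball indices \(\{1,\dots,n\}\) with the Poisson arrival times \(\{T_j : T_j\le t\}\). You write that transfer out explicitly through the counts \(N_i(tB)\) and the almost-sure event on which \(\Pi(t)<\infty\).
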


\begin{proof}
The result follows from Lemma~\ref{L1} by replacing the first \(n\) sequential ball indices \(\{1,\dots,n\}\) with the points \(\{T_j : T_j \leq t\}\) of the Poisson process up to time \(t\).
\end{proof}

\begin{remark}\label{R2}
Analogously to Remark~\ref{R1}, for each fixed \(t > 0\), the map \(A \mapsto R_{tA,1}^{\Pi,*}\) is a random integer-valued outer measure on \([0,1]\), but this fails for \(k > 1\). 
\end{remark}

Let \(\#(nA)\) denote the cardinality of the set \(nA \cap \mathbb{Z}_+\). We write \(|A|\) for the Lebesgue measure of a Borel set \(A \subseteq [0,1]\) and \(\stackrel{d}{=}\) for equality in distribution.

Lemma~\ref{L3} collects weak and moment properties of the statistics, including convergence of expectations to infinity and laws of large numbers.

\begin{lemma}\label{L3}
Let \(A \subset[0,1]\), \(n,k \geq 1\). Then
\begin{itemize}
\item[(A1)] \(R_{nA,k}^* \stackrel{d}{=} R_{\#(nA),k}^*\).

\item[(A2)] \(\mathbb{E} R_{nA,k}^* = \mathbb{E} R_{\#(nA),k}^*\) and \(\mathrm{Var}(R_{nA,k}^*) \leq \mathbb{E} R_{nA,k}^*\).

\item[(A3)] If \(\#(nA) \to \infty\) as \(n \to \infty\), then \(\mathbb{E} R_{nA,k}^* \to \infty\) and \(R_{nA,k}^*/\mathbb{E} R_{nA,k}^* \to 1\) in probability.
\end{itemize}

Let \(A \in \mathcal{B}[0,1]\), \(t > 0\), \(k \geq 1\). Then
\begin{itemize}
\item[(B1)] \(R_{tA,k}^{\Pi,*} \stackrel{d}{=} R_{t|A|,k}^{\Pi,*}\).

\item[(B2)] \(\mathbb{E} R_{tA,k}^{\Pi,*} = \mathbb{E} R_{t|A|,k}^{\Pi,*}\) and \(\mathrm{Var}(R_{tA,k}^{\Pi,*}) \leq \mathbb{E} R_{tA,k}^{\Pi,*}\).

\item[(B3)] If \(|A| > 0\), then as \(t \to \infty\), \(\mathbb{E} R_{tA,k}^{\Pi,*} \to \infty\) and \(R_{tA,k}^{\Pi,*}/\mathbb{E} R_{tA,k}^{\Pi,*} \to 1\) in probability.
\end{itemize}
\end{lemma}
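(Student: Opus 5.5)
The plan is to reduce everything to exchangeability of the i.i.d. labels and to an indicator decomposition over urns, then apply Chebyshev's inequality.

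\emph{Distributional identities.} For (A1), enumerate the finite set $nA\cap\mathbb{Z}_+=\{j_1<\dots<j_m\}$ with $m=\#(nA)$. The vector $(X_{j_1},\dots,X_{j_m})$ has the same law as $(X_1,\dots,X_m)$ because the $X_i$ are i.i.d. Moreover, $R_{nA,k}^*$ is the same measurable functional of the first vector as $R^*_{m,k}$ is of the second, namely the number of values occurring at least $k$ times. If $m=0$, both sides vanish. The equality of expectations in (A2) follows at once.

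For (B1), the restriction of the Poisson process to $tA$ is a Poisson process with $\Pi(tA)\sim\mathrm{Poisson}(t|A|)$ points. Its marks $X_j$ are i.i.d. and independent of the point locations. Hence $R_{tA,k}^{\Pi,*}$ is the count of values occurring at least $k$ times among $N$ i.i.d. labels with $N\sim\mathrm{Poisson}(t|A|)$ independent of the labels. This is exactly the law of $R_{t|A|,k}^{\Pi,*}$, and expectations agree.

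\emph{Variance bound.} Write $R=\sum_i I_i$, where $I_i$ indicates that urn $i$ receives at least $k$ balls. Then
\[
\mathrm{Var}\,R=\sum_i \mathrm{Var}\,I_i+\sum_{i\ne j}\mathrm{Cov}(I_i,I_j)\le \mathbb{E}R+\sum_{i\ne j}\mathrm{Cov}(I_i,I_j),
\]
using $\mathrm{Var}\,I_i\le \mathbb{E}I_i$. In the Poissonized case the urn occupancies are independent Poisson variables, so all covariances vanish and $\mathrm{Var}\le\mathbb{E}$ immediately.

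For fixed $m$, the main step is to show $\mathrm{Cov}(I_i,I_j)\le0$, i.e. negative association of the multinomial occupancy vector $(\nu_i,\nu_j)$. Fix urns $i\ne j$ and condition on $\nu_i=a$. Then $\nu_j\sim\mathrm{Bin}(m-a,\,p_j/(1-p_i))$, which is stochastically decreasing in $a$. Hence $\mathbb{P}(\nu_j\ge k\mid \nu_i=a)$ is nonincreasing in $a$. A Chebyshev/Harris-type inequality for a monotone increasing function $\mathbf 1(a\ge k)$ against a nonincreasing one then gives
\[
\mathbb{E}[I_iI_j]\le \mathbb{E}I_i\,\mathbb{E}I_j .
\]
This step is the only real obstacle, and the conditioning argument handles it. Alternatively one can cite Joag-Dev and Proschan's negative association of multinomial counts. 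Combining, $\mathrm{Var}\,R_{m,k}^*\le\mathbb{E}R_{m,k}^*$, and by (A1) the same holds for $R_{nA,k}^*$.

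\emph{Growth and LLN.} For (A3) with $m=\#(nA)\to\infty$, we have
\[
\mathbb{E}R^*_{m,k}=\sum_i \mathbb{P}\bigl(\mathrm{Bin}(m,p_i)\ge k\bigr).
\]
For each fixed $N$, the first $N$ terms each tend to $1$, because every $p_i>0$. So $\liminf_m \mathbb{E}R^*_{m,k}\ge N$ for every $N$, hence the expectation tends to infinity. The unboundedness of $X_1$ ensures infinitely many urns with $p_i>0$. Chebyshev's inequality together with the variance bound gives
\[
\mathbb{P}\bigl(|R/\mathbb{E}R-1|>\varepsilon\bigr)\le \frac{1}{\varepsilon^2\,\mathbb{E}R}\to0 .
\]
Case (B3) is identical: use $\sum_i\mathbb{P}\bigl(\mathrm{Poisson}(tp_i|A|)\ge k\bigr)$ with $t|A|\to\infty$, which requires $|A|>0$.
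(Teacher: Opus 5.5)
Your proof is correct, and it differs from the paper's mainly in being self-contained. The paper proves only the two reductions: (A1) from the definitions, and (B1) from the splitting property of the Poisson process. It then obtains (A2), (A3), (B2) and (B3) by citing Bahadur and Karlin.

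You prove the variance bound directly. In the Poissonized case you use independence of the urn counts $\Pi_i(tA)$. In the fixed-sample case you show $\mathrm{Cov}(\mathbf{1}(\nu_i\ge k),\mathbf{1}(\nu_j\ge k))\le 0$ by conditioning on $\nu_i$ and applying Chebyshev's association inequality. Divergence of the mean then follows from $p_i>0$ for every $i$ and a term-by-term limit, and the law of large numbers follows from Chebyshev's inequality.

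What your route buys is transparency for $k>1$. As Remark~\ref{R3} notes, the configuration-function argument of Boucheron et al.\ yields $\mathrm{Var}\le\mathbb{E}$ only for $k=1$. Your negative correlation of the multinomial indicators $\mathbf{1}(\nu_i\ge k)$ works uniformly in $k$. It also makes clear that (A3) and (B3) hold for every infinite urn scheme, with no regularity assumption. The paper's route buys brevity, at the cost of leaving the reader to check which cited results actually cover general $k$.
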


\begin{proof}
Property (A1) follows from the definitions of \(R_{nA,k}^*\) and \(\#(nA)\). Thus, (A2) and (A3) are known results of Bahadur~\cite{Bahadur} and Karlin~\cite{Karlin}. For (B1), the Poisson process on \(tA\) has intensity \(t|A|\)  by the splitting property of Poisson processes. Hence, (B2) and (B3) follow from (B1) and the results of Karlin~\cite{Karlin}. 
\end{proof}

\begin{remark}\label{R3}
Note that \(R_{nA,1}^*\) is a configuration function associated with the ``distinctness'' property; see \cite[Section 3.3]{Boucheron}. Thus, \cite[Corollary 3.8]{Boucheron} yields \(\mathrm{Var}(R_{nA,1}^*) \leq \mathbb{E} R_{nA,1}^*\). However, for \(k > 1\), \(R_{nA,k}^*\) is not a configuration function. Moreover, it is not self-bounded in the sense of \cite[Corollary 3.7]{Boucheron}. 
\end{remark}

\begin{remark}\label{R4}
The assumptions of Lemma~\ref{L3}(B3) do not imply \(\#(nA) \to \infty\). A key example is \(A = [0,1] \setminus \mathbb{Q}\): then \(|A| = 1\) but \(A\) contains no rationals, so \(\#(nA) = 0\) for all \(n \in \mathbb{Z}_+\).

For Jordan measurable \(A\), we have \(\#(nA)/n \to |A|\). Indeed, \(A\) is Jordan measurable if and only if its topological boundary has Lebesgue measure zero, which (by the Lebesgue--Vitali covering theorem and properties of Riemann sums) is equivalent to the Riemann integrability of \(\mathbf{1}_A\), with \(\#(nA)/n\) as a Riemann sum approximation. 
\end{remark}

\section{Central Limit Theorem for the Poissonization}\label{sec3}

In this section, we prove CLT for the Poissonized statistics. Theorem~\ref{CLT} significantly generalizes Karlin's \cite[Theorem 4]{Karlin} CLT for Poissonized occupancies: rather than the full interval \([0,t]\), we consider arbitrary Borel sets \(tA\). Moreover, Theorem~\ref{CLT} provides the explicit covariances of the limiting Gaussian vector, which coincide with those in the functional central limit theorem (Theorem~\ref{FCLT}) of the next section. 
Denote by 
\begin{align*}
R_{nA,k}&:=R_{nA,k}^* - R_{nA,k+1}^*, \ \   
R^{\Pi}_{tA,k}:=R^{\Pi,*}_{tA,k}-R^{\Pi,*}_{tA,k+1},\\
    R_{n,k}&:=R_{n[0,1],k}, \ \   
R^{\Pi}_{t,k}:=R^{\Pi}_{t[0,1],k},\\
M(t)&:=\mathbb{E} R_{\Pi(t)}, \ \ M_k(t):=\mathbb{E} R^{\Pi}_{t,k}.
\end{align*}

By Lemma \ref{L3} (B2), if $A\in \mathcal{B}[0,\, 1]$ and $|A|>0$, then $\mathbb{E}\, R^{\Pi}_{tA,k}=M_k(t|A|)$.
Let $\Pi_i(A)$ be the number of balls in the Poissonized
version of the process in urn $i$ with times from 
Borel set $A$. Hence $\Pi_i(A)$ has a Poisson distribution with parameter $p_i|A|$.

Define the covariances of Poissonized statistics by
\[{c}_{k_1, k_2} (A_1,A_2):={\bf cov} (R^{\Pi}_{A_1,k_1}, R^{\Pi}_{A_2,k_2}), \ \  {c}_{k_1, k_2}^* (A_1,A_2):={\bf cov} (R^{\Pi,*}_{A_1,k_1}, R^{\Pi,*}_{A_2,k_2}).\]

\begin{proposition}\label{corr}
Let $A_1,A_2  \in \mathcal{B}[0,\, \infty)$,
 and $k_1,k_2\ge 1$.
Then
\begin{align*}
    {c}_{k_1, k_2} (A_1,A_2) 
=\sum_{i=1}^{\infty} {\bf cov}({\bf 1}(\Pi_i(A_1)=k_1), {\bf 1}(\Pi_i (A_2)=k_2)),
\end{align*}
and
\begin{align*}{c}^*_{k_1, k_2} (A_1,A_2) 
=&\sum_{i=1}^{\infty} {\bf cov}({\bf 1}(\Pi_i(A_1)\ge k_1), {\bf 1}(\Pi_i (A_2)\ge k_2))\\ 
=&\sum_{i=1}^{\infty} {\bf cov}({\bf 1}(\Pi_i(A_1)< k_1), {\bf 1}(\Pi_i (A_2)< k_2)).\end{align*}
Moreover, if $|A_1|<\infty$ and $|A_2|<\infty$, then
 $c^*_{1,1} (A_1,A_2)=M(|A_1|+|A_2|)-M(|A_1 \cup A_2|)$.

\end{proposition}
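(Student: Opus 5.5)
The plan is to use Poissonization directly. Write each statistic as a sum over urns:
\[
R^{\Pi}_{A,k}=\sum_{i\ge1}\mathbf{1}(\Pi_i(A)=k), \qquad R^{\Pi,*}_{A,k}=\sum_{i\ge1}\mathbf{1}(\Pi_i(A)\ge k).
\]
Here $\Pi_i$ denotes the point process of arrival times whose ball lands in urn $i$. By the marking (splitting) property of the Poisson process, the processes $\Pi_1,\Pi_2,\dots$ are independent Poisson processes on $[0,\infty)$ with intensities $p_1,p_2,\dots$. Hence for $i\neq j$ the pairs $(\Pi_i(A_1),\Pi_i(A_2))$ and $(\Pi_j(A_1),\Pi_j(A_2))$ are independent, and all cross-urn covariances vanish. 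This formally gives both series formulas.

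To make the interchange of covariance and the infinite sum rigorous, I will check square integrability and $L^2$ convergence. If $|A_1|=\infty$, then $\Pi_i(A_1)=\infty$ a.s. and every indicator is degenerate, so assume $|A_\ell|<\infty$. Each indicator is bounded by $\mathbf{1}(\Pi_i(A_\ell)\ge1)$, whose mean is at most $p_i|A_\ell|$. Since $\sum_i p_i<\infty$, the series converge in $L^1$ and the sum of variances is finite. Independent centered summands with summable variances converge in $L^2$, so the covariance of the sums equals the sum of the covariances. This justification is the main (mild) technical point; the rest is algebra.

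The second expression for $c^*$ is immediate, since $\mathbf{1}(\Pi_i\ge k)=1-\mathbf{1}(\Pi_i<k)$ and covariance is invariant under $X\mapsto 1-X$ applied to both arguments, the sign changes cancelling.

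For the final identity, take $k_1=k_2=1$ and use the $<$ form:
\[
\mathbf{cov}\bigl(\mathbf{1}(\Pi_i(A_1)=0),\mathbf{1}(\Pi_i(A_2)=0)\bigr)=\mathbb{P}(\Pi_i(A_1\cup A_2)=0)-\mathbb{P}(\Pi_i(A_1)=0)\,\mathbb{P}(\Pi_i(A_2)=0),
\]
which equals $e^{-p_i|A_1\cup A_2|}-e^{-p_i(|A_1|+|A_2|)}$. Since $M(t)=\sum_i(1-e^{-p_it})$, summing over $i$ (both series converge absolutely, being bounded termwise by $p_i$ times a constant) gives
\[
\sum_i\bigl[(1-e^{-p_i(|A_1|+|A_2|)})-(1-e^{-p_i|A_1\cup A_2|})\bigr]=M(|A_1|+|A_2|)-M(|A_1\cup A_2|),
\]
as required.
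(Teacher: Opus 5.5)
Your proof is correct and follows the route the paper intends. The paper's proof only says the statement ``follows from the definitions and simple observations,'' and your argument supplies exactly those omitted details: independence of the urn processes $\Pi_i$ by Poisson marking, $L^2$ convergence to interchange covariance and sum, invariance of covariance under $X\mapsto 1-X$, and the explicit $k_1=k_2=1$ computation via $M(t)=\sum_i(1-e^{-p_it})$. One small refinement: when $|A_\ell|=\infty$ you get $R^{\Pi,*}_{A_\ell,k}=\infty$ a.s., so $c^*$ is undefined there rather than trivially equal to the series, which is why restricting to finite-measure sets, as you do, is the right reading of the statement.
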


\begin{proof}
The statement follows from the definitions and simple observations.
\end{proof}

Karlin proved that if the regularity assumption (\ref{reg}) holds for $\theta\in(0,1)$ then for any $k\ge 1$ as $t \to \infty$
\begin{equation}\label{conv}
M(t)\sim\Gamma(1-\theta)t^{\theta}L(t), \ \ \ M_k(t) \sim \frac{\theta \Gamma(k-\theta)}{k!} t^{\theta}L(t).
\end{equation}
Therefore, as $t \to \infty$
\begin{equation}\label{Mkto}
\frac{M_k(t)}{M(t)} \to q_k:=(-1)^{k+1}\binom{\theta}{k},
\end{equation}
 where $q_k$ is the Karlin-Rouault probability mass function, see \cite{Karlin,Janson,Naulet}.

Denote the weak convergence by $\Rightarrow$, and define the following processes by
\begin{equation}\label{Y}
Y_{t,A,k}^{\Pi,*} := \frac{R_{tA,k}^{\Pi,*} - \mathbb{E} \, R_{tA,k}^{\Pi,*}}{\sqrt{M(t)}},
\ \ \ \
Y_{n,A,k}^* := \frac{R_{nA,k}^* - \mathbb{E} \, R_{nA,k}^*}{\sqrt{M(n)}}. 
\end{equation}

\bigskip

\begin{theorem}[CLT for Poissonization]\label{CLT}
Let \(k_1, \dots, k_m \geq 1\) and \(A_1, \dots, A_m \in \mathcal{B}[0,1]\) for some \(m \geq 1\), and assume the regularity condition \eqref{reg} holds for some \(\theta \in (0,1)\). Then, as \(t \to \infty\),
\[
\Bigl( Y_{t,A_1,k_1}^{\Pi,*}, \dots, Y_{t,A_m,k_m}^{\Pi,*} \Bigr) \Rightarrow \Bigl( Y_{A_1,k_1}^*, \dots, Y_{A_m,k_m}^* \Bigr),
\]
where the limit is a centered Gaussian vector with covariances
\begin{equation}\label{K*k1}
K^*(A_1,k_1,A_2,k_2) := \frac{1}{\Gamma(1-\theta)} \int_0^\infty \mathrm{cov} \bigl( \mathbf{1}(\Pi(uA_1) < k_1), \mathbf{1}(\Pi(uA_2) < k_2) \bigr) \, d(-u^{-\theta}).
\end{equation}
\end{theorem}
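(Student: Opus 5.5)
By Poissonization the urns are independent. We therefore write each statistic as a sum of independent, bounded summands over urns and apply the Cramér–Wold device with a Lindeberg (Lyapunov) CLT. Concretely,
\[
R_{tA_j,k_j}^{\Pi,*}=\sum_{i\ge 1}\mathbf{1}\bigl(\Pi_i(tA_j)\ge k_j\bigr),
\]
and for fixed $i$ the vector $(\Pi_i(tA_1),\dots,\Pi_i(tA_m))$ is a functional of the Poisson process of urn $i$. The thinned processes of distinct urns are independent Poisson processes with intensities $p_i$. Fix real $\lambda_1,\dots,\lambda_m$ and set
\[
\xi_i(t):=\sum_{j=1}^m\lambda_j\bigl(\mathbf{1}(\Pi_i(tA_j)\ge k_j)-\mathbb{P}(\Pi_i(tA_j)\ge k_j)\bigr).
\]
Then $\sum_j\lambda_jY^{\Pi,*}_{t,A_j,k_j}=M(t)^{-1/2}\sum_i\xi_i(t)$. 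This is a sum of independent centered variables bounded by $\sum_j|\lambda_j|$. The sum converges a.s. because its variance is finite: by Lemma~\ref{L3}(B2) each variance is at most $\mathbb{E}R^{\Pi,*}_{tA_j,k_j}\le M(t)$. Since $M(t)\to\infty$ by (\ref{conv}), the Lindeberg condition holds trivially once we show that $\mathrm{Var}\sum_i\xi_i(t)/M(t)$ converges to $\sum_{j,l}\lambda_j\lambda_lK^*(A_j,k_j,A_l,k_l)$. If that limit is zero, the convergence to zero follows by Chebyshev. Everything therefore reduces to the covariance asymptotics
\[
\frac{c^*_{k_1,k_2}(tA_1,tA_2)}{M(t)}\to K^*(A_1,k_1,A_2,k_2).
\]

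For the covariance step we use Proposition~\ref{corr}, which gives
\[
c^*_{k_1,k_2}(tA_1,tA_2)=\sum_i g(tp_i),\qquad g(u):=\mathrm{cov}\bigl(\mathbf{1}(\Pi(uA_1)<k_1),\mathbf{1}(\Pi(uA_2)<k_2)\bigr),
\]
where $\Pi$ is a unit-rate Poisson process on $[0,1]$. The dependence on $p_i$ enters only through $tp_i$ because $\Pi_i(tA)$ is the count of a rate-$p_i$ process on $tA$, which equals in law $\Pi(tp_iA)$ for a unit process, jointly in the sets. Writing the sum as a Stieltjes integral against the counting function $\alpha$ from (\ref{reg}) gives
\[
\sum_i g(tp_i)=\int_0^\infty g(t/x)\,d\alpha(x).
\]
The substitution $x=t/u$ turns this into
\[
\int_0^\infty g(u)\,d\bigl(-\alpha(t/u)\bigr).
\]
By regular variation, $\alpha(t/u)/\alpha(t)\to u^{-\theta}$. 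Karlin's asymptotics (\ref{conv}) give $M(t)\sim\Gamma(1-\theta)\alpha(t)$, and dividing by it yields the claimed formula with the factor $1/\Gamma(1-\theta)$ and measure $d(-u^{-\theta})$.

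The main obstacle is justifying the passage to the limit inside the integral, since the measures $d(-\alpha(t/u))/\alpha(t)$ have unbounded mass near $u=0$. We control $g$ at both ends.
\begin{itemize}
\item[(a)] As $u\to0$: $|g(u)|\le \mathbb{P}(\Pi(uA_1)\ge k_1)\le \mathbb{P}(\Pi(u)\ge1)\le u$. Integrating by parts, the contribution of $(0,\varepsilon)$ is bounded by $\int_0^\varepsilon \alpha(t/u)/\alpha(t)\,du$. Potter's bounds with exponent $\theta+\delta<1$ make this $O(\varepsilon^{1-\theta-\delta})$ uniformly in large $t$.
\item[(b)] As $u\to\infty$: $|g(u)|\le\mathbb{P}(\Pi(uA_1)<k_1)$ decays exponentially when $|A_1|>0$, and $g\equiv0$ when $|A_1|=0$ or $|A_2|=0$. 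The tail is then handled by $\alpha(t/u)/\alpha(t)\le$ a bounded quantity for $u\ge1$, since $\alpha$ is monotone.
\end{itemize}
On compact intervals $[\varepsilon,U]$, $g$ is continuous and of bounded variation. Here the vague convergence of the measures, together with integration by parts, gives convergence; this is the standard Karlin argument, essentially a Tauberian-type step.

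Finally, positive semidefiniteness of $K^*$ is automatic because it is a limit of covariance matrices. Hence the Cramér–Wold device gives convergence of the vector to a centered Gaussian vector with covariances (\ref{K*k1}).
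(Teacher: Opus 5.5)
Your proposal is correct. The weak-convergence part is the same as in the paper. Thinning gives independence across urns, the summands are bounded, and Cram\'er--Wold plus Lindeberg finishes it, with Lindeberg automatic because $M(t)\to\infty$; this is the paper's Dutko-type argument written out.

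The covariance asymptotics is where your route genuinely differs.
\begin{itemize}
\item The paper expands each per-urn covariance into a finite combination of terms $(tp_i)^k e^{-tp_i c}$ with $c\in\{|A_1\cup A_2|,\,|A_1|+|A_2|\}$. Summing over $i$ gives $M$ and $M_k$ at the rescaled times $t|A_1\cup A_2|$ and $t(|A_1|+|A_2|)$. It then invokes Karlin's asymptotics \eqref{conv}--\eqref{Mkto} and converts the closed form \eqref{K_fin} back into the integral \eqref{K*k1} via the integral representation of $q_k$.
\item You instead prove a single Abelian statement for the test function $g$ directly. You write $\sum_i g(tp_i)=\int_0^\infty g(u)\,d(-\alpha(t/u))$ and control $u\to0$ by $|g(u)|\le u$ together with Potter's bounds, which is exactly where $\theta<1$ enters. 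You control $u\to\infty$ by the decay of $g$ and the mass bound $\alpha(t/U)\le\alpha(t)$. On compacts you use the locally uniform convergence $\alpha(t/u)/\alpha(t)\to u^{-\theta}$.
\end{itemize}

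What each route buys:
\begin{itemize}
\item Your route needs from Karlin only $M(t)\sim\Gamma(1-\theta)\alpha(t)$ and never computes the coefficients $\nu_{k,\cdot}$. It lands on \eqref{K*k1} directly, and it covers the degenerate case $|A_1\cup A_2|=0$ for free (there $g\equiv0$). In that case the paper's normalizations $|A_1\cup A_2|^{-k}$ and its use of $M_k(t|A|)$-asymptotics are not available.
\item The paper's route produces the explicit finite formula \eqref{K_fin} in terms of $q_k$ and the powers $|A_1\cup A_2|^{\theta}$, $(|A_1|+|A_2|)^{\theta}$. That formula is what gives Corollary~\ref{K*} immediately.
\end{itemize}
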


\begin{proof}

Denote \(A_1 \setminus A_2\), \(A_1 \cap A_2\), and \(\overline{A_1} \cap A_2\) by \(B_1\), \(B_2\), and \(B_3\), respectively. Note that \(A_1 \cup A_2 = B_1 + B_2 + B_3\) (disjoint union). By definition \eqref{Y} and Proposition~\ref{corr}, we have
\begin{align*}
K^*(A_1,k_1,A_2,k_2)&= \lim_{t \to \infty} \frac{c^*_{k_1,k_2} (tA_1,tA_2)}{M(t)}\\ 
&=\lim_{t \to \infty} \frac{1}{M(t)}
\sum_{i=1}^{\infty} {\bf cov}\bigg({\bf 1}(\Pi_i(tA_1)< k_1), {\bf 1}(\Pi_i (tA_2)< k_2)\bigg). 
\end{align*}
For fixed $i\ge 1$ we have
\begin{align*}
    &{\bf cov}\bigg({\bf 1}(\Pi_i(tA_1)< k_1), {\bf 1}(\Pi_i (tA_2)< k_2)\bigg)\\
&= 
\sum_{m=0}^{\min(k_1,k_2)} 
\mathbb{P} (\Pi_i(tB_1)<k_1-m, \ \Pi_i(tB_2)=m, \ \Pi_i(tB_3)<k_2-m) \\
& \qquad\qquad\qquad\qquad\qquad\qquad\qquad\qquad\qquad- \mathbb{P} (\Pi_i(tA_1)<k_1) \ \mathbb{P}(\Pi_i(tA_2)<k_2)\\
&=\sum_{k=0}^{k_1+k_2-2}  \left( \nu_{k,1} (tp_i)^k e^{-tp_i|A_1\cup A_2|}
-\nu_{k,2} (tp_i)^k e^{-tp_i(|A_1|+|A_2|)}
\right),
\end{align*}
where corresponding coefficients $\nu_{k,1}$, $\nu_{k,2}$ depend on $k, \ |B_1|, \ |B_2|$, and $|B_3|$. Note that $M(t)=\sum_{i=1}^{\infty}(1-e^{-tp_i})$ and $\nu_{0,1}=\nu_{0,2}=1$. Hence,
\begin{align}\notag
    &K^*(A_1,k_1,A_2,k_2)\\\notag &= \lim_{t \to \infty} \frac{1}{M(t)}\bigg( c^*_{1,1}(tA_1, tA_2) +\sum_{k=1}^{k_1+k_2-2}  \left( \mu_{k,1}  M_k (t|A_1\cup A_2|)
-\mu_{k,2} M_k (t(|A_1|+|A_2|))\right)\bigg)\\ \label{K_fin}
&=(|A_1|+|A_2|)^{\theta}-|A_1\cup A_2|^{\theta} 
+\sum_{k=1}^{k_1+k_2-2}  \left( \mu_{k,1}  |A_1\cup A_2|^{\theta} 
-\mu_{k,2} (|A_1|+|A_2|)^{\theta}
\right) q_k,
\end{align}
where $\mu_{k,1}=\nu_{k,1} k! |A_1\cup A_2|^{-k}$ and $\mu_{k,2}=\nu_{k,2} k! (|A_1|+|A_2|)^{-k}$ for $k\ge 1$.
By simple observation
\[
q_k=\frac{1}{\Gamma(1-\theta)}
 \int_0^{\infty} \mathbb{P} (\Pi(u)=k)\, d(-u^{-\theta}),
\]
with substitutions $u=t|A_1\cup A_2|$ and
$u=t(|A_1|+|A_2|)$, we get (\ref{K*k1}) from (\ref{K_fin}).

Weak convergence of the Poissonization $\left(
Y_{t,A_1,k_1}^{\Pi,*}, \ldots, Y_{t,A_m,k_m}^{\Pi,*}
\right) $ to the limiting random variable follows from the modification of the Dutko \cite{dutko1989central} argument: vectors of indicators
\[
\bigg({\bf 1}(\text{there are not lesser than } k_r 
\text{ balls } j \text{ in urn } i 
\text{ with } t_j \in tA_r)\bigg)_{r=1,\ldots,m}
\]
are independent for different $i$ (due to the splitting property of the Poisson process) and satisfy the Lindeberg condition.
\end{proof}

Taking $k_1=k_2=1$, from (\ref{K_fin}) we get the following corollary.
\begin{corollary}\label{K*}
 $
 K^*(A_1,1,A_2,1)= (|A_1|+|A_2|)^{\theta}-|A_1 \cup A_2|^{\theta}
$.
\end{corollary}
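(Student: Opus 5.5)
The plan is to specialize formula \eqref{K_fin} to $k_1=k_2=1$. In that case the correction sum over $k$ runs from $1$ to $k_1+k_2-2=0$, so it is empty. Only the leading term $(|A_1|+|A_2|)^{\theta}-|A_1\cup A_2|^{\theta}$ remains, which is exactly the claim.

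To make this independent of the bookkeeping of the coefficients $\nu_{k,j}$, I would also verify it directly from Proposition~\ref{corr} as a consistency check. The last statement there gives
\[
c^*_{1,1}(tA_1,tA_2)=M(t(|A_1|+|A_2|))-M(t|A_1\cup A_2|).
\]
One can also see this per urn. With $B_1,B_2,B_3$ as in the proof of Theorem~\ref{CLT}, the covariance of $\mathbf{1}(\Pi_i(tA_1)=0)$ and $\mathbf{1}(\Pi_i(tA_2)=0)$ equals
\[
e^{-tp_i|A_1\cup A_2|}-e^{-tp_i(|A_1|+|A_2|)}.
\]
Summing over $i$, and using $M(s)=\sum_i(1-e^{-sp_i})$, gives the same difference of $M$-values.

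Next I would divide by $M(t)$ and use \eqref{conv}. Since $M(t)\sim\Gamma(1-\theta)t^\theta L(t)$ is regularly varying with index $\theta$, we get $M(ts)/M(t)\to s^\theta$ for every fixed $s>0$. This requires only the defining property of the slowly varying $L$.

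The only point needing care is a degenerate case, where $|A_1|=0$ or $|A_2|=0$ (or $|A_1\cup A_2|=0$). If $|A_1|=0$, the first term is $M(t|A_2|)$ and the second is $M(t|A_2|)$, so the covariance is identically zero and the formula also gives $0$. If both sets are null, $M(0)=0$ and the formula again gives $0$. Hence the limit $K^*(A_1,1,A_2,1)$, defined in the proof of Theorem~\ref{CLT} as $\lim c^*_{1,1}(tA_1,tA_2)/M(t)$, equals $(|A_1|+|A_2|)^{\theta}-|A_1\cup A_2|^{\theta}$ in all cases.

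Optionally, I would cross-check against the integral form \eqref{K*k1}. The integrand there is
\[
e^{-u|A_1\cup A_2|}-e^{-u(|A_1|+|A_2|)},
\]
and $\int_0^\infty(1-e^{-cu})\,d(-u^{-\theta})=\Gamma(1-\theta)c^\theta$ by integration by parts. Taking the difference for $c=|A_1|+|A_2|$ and $c=|A_1\cup A_2|$ recovers the formula. I expect no real obstacle here.
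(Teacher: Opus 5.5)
Your proposal is correct and matches the paper, which obtains the corollary simply by setting $k_1=k_2=1$ in \eqref{K_fin}, where the correction sum over $k$ from $1$ to $k_1+k_2-2=0$ is empty. Your direct check via the last identity of Proposition~\ref{corr} and $M(ts)/M(t)\to s^\theta$ is exactly the leading ($k=0$) term of the paper's derivation of \eqref{K_fin}, and your cross-check against the integral form \eqref{K*k1} is also correct.
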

\begin{corollary}\label{KAA}
Let $A \in \mathcal{B}[0,\, 1]$, $i,j\ge 1$. Then
\[
K(A,i,A,j):= \lim_{t \to \infty} \frac{c_{ij} (tA_1,tA_2)}{M(t)} = 
\pi_{ij}|A|^{\theta},
\]
where
\[
\pi_{ij}:=
{\bf 1}(i=j) 
q_{i}
\ -  \binom{i+j}{i} 2^{\theta-i-j} q_{i+j}.
\]
\end{corollary}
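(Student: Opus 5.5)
The plan is to compute \(c_{ij}(tA,tA)\) exactly through Proposition~\ref{corr} and then reduce every term to the functions \(M_k\), whose asymptotics are given by \eqref{conv} and \eqref{Mkto}. The statement has \(c_{ij}(tA_1,tA_2)\) with \(A_1=A_2=A\), so the quantity to study is \(c_{ij}(tA,tA)\).

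\textbf{Trivial case.} If \(|A|=0\), then all \(\Pi_l(tA)=0\) almost surely. Every indicator is then constant, so the covariance vanishes and both sides equal \(0\).

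\textbf{Exact formula.} Assume \(|A|>0\) and write \(x_l:=t|A|p_l\), so that \(\Pi_l(tA)\) is Poisson with parameter \(x_l\). By Proposition~\ref{corr},
\[
c_{ij}(tA,tA)=\sum_{l=1}^\infty\Bigl({\bf 1}(i=j)\,\mathbb{P}(\Pi_l(tA)=i)-\mathbb{P}(\Pi_l(tA)=i)\,\mathbb{P}(\Pi_l(tA)=j)\Bigr).
\]
I will evaluate the two sums separately.

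\textbf{First sum.} By definition of \(M_i\), the first sum is
\[
\sum_l e^{-x_l}\frac{x_l^i}{i!}=\mathbb{E}R^{\Pi}_{t|A|,i}=M_i(t|A|).
\]

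\textbf{Second sum.} The key algebraic step rewrites the product of the two Poisson probabilities as a Poisson probability at doubled intensity:
\[
e^{-2x}\frac{x^{i+j}}{i!\,j!}=\binom{i+j}{i}2^{-i-j}\,e^{-2x}\frac{(2x)^{i+j}}{(i+j)!}.
\]
Summing over \(l\), the second sum therefore equals
\[
\binom{i+j}{i}2^{-i-j}M_{i+j}(2t|A|).
\]
So
\[
c_{ij}(tA,tA)={\bf 1}(i=j)\,M_i(t|A|)-\binom{i+j}{i}2^{-i-j}M_{i+j}(2t|A|).
\]
Both series converge absolutely, since they are dominated by \(\sum_l(1-e^{-x_l})<\infty\), so splitting the sum into two pieces is legitimate.

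\textbf{Passage to the limit.} Divide by \(M(t)\) and apply \eqref{conv}–\eqref{Mkto} together with the regular variation of \(t^\theta L(t)\). These give \(M(ct)/M(t)\to c^\theta\) for every fixed \(c>0\), and hence
\[
\frac{M_k(ct)}{M(t)}=\frac{M_k(ct)}{M(ct)}\cdot\frac{M(ct)}{M(t)}\to q_k c^\theta .
\]
With \(c=|A|\) for the first term and \(c=2|A|\) for the second, the limit is
\[
{\bf 1}(i=j)\,q_i|A|^\theta-\binom{i+j}{i}2^{-i-j}q_{i+j}2^\theta|A|^\theta=\pi_{ij}|A|^\theta .
\]

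The only point that needs care is the rewriting of the product of Poisson probabilities as a single Poisson probability at intensity \(2x\), since this is what makes the second sum expressible through \(M_{i+j}\). Everything after that is a direct application of Karlin's asymptotics \eqref{conv}.
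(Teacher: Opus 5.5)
Your proof is correct, but it takes a different route from the paper. The paper's proof is a one-line appeal to the integral representation \eqref{K*k1} of Theorem~\ref{CLT}: it writes $K(A,i,A,j)$ as $\frac{1}{\Gamma(1-\theta)}\int_0^\infty \mathrm{cov}(\mathbf{1}(\Pi(tA)=i),\mathbf{1}(\Pi(tA)=j))\,d(-t^{-\theta})$ and then evaluates the integral. That evaluation tacitly uses three ingredients:
\begin{itemize}
\item the identity $q_k=\frac{1}{\Gamma(1-\theta)}\int_0^\infty \mathbb{P}(\Pi(u)=k)\,d(-u^{-\theta})$;
\item the substitutions $u=t|A|$ and $u=2t|A|$;
\item the same Poisson-doubling identity you use, applied inside the integrand.
\end{itemize}
Strictly speaking, Theorem~\ref{CLT} is stated for the \emph{at least $k$} statistics. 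Transferring \eqref{K*k1} to exact counts therefore needs a bilinearity step, $\mathbf{1}(\Pi=i)=\mathbf{1}(\Pi<i+1)-\mathbf{1}(\Pi<i)$, which the paper does not spell out.

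You instead stay at finite $t$. Proposition~\ref{corr} and the doubling identity give the exact formula $c_{ij}(tA,tA)=\mathbf{1}(i=j)M_i(t|A|)-\binom{i+j}{i}2^{-i-j}M_{i+j}(2t|A|)$. The limit then follows from \eqref{conv}, \eqref{Mkto} and regular variation alone. Your route is more self-contained: it needs no integral representation and no detour through the starred statistics. It also gives a non-asymptotic covariance formula and treats $|A|=0$ explicitly. The paper's route, in exchange, directly yields the integral form of the covariance, which is what is later matched with the Wiener-integral representation in Corollary~\ref{Q}.

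One thing you should state explicitly is the standing assumption \eqref{reg} with $\theta\in(0,1)$. You need it for \eqref{conv}, and the corollary inherits it from Theorem~\ref{CLT}. It is also what makes $|A|^\theta=0$ in your trivial case.
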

\begin{proof}
By Theorem \ref{CLT} we have
\begin{align*}
    K(A,i,A,j)&=
\frac{1}{\Gamma(1-\theta)}
 \int_0^{\infty} {\bf cov} \big( {\bf 1}(\Pi(tA)=i), {\bf 1}(\Pi(tA)= j)\big)
d(-t^{-\theta})=\pi_{ij}|A|^{\theta}.
\end{align*}
\end{proof}

\section{Functional Central Limit Theorem}\label{sec4}

In this section, we prove FCLT. Theorem~\ref{FCLT} extends \cite[Theorem 1]{Chebunin2016} to multidimensional time. The main difficulty is that the statistics \(R_{nA,k}^*\) are not monotone in \(n\), so classical arguments based on monotonicity are not available. We begin by analyzing the properties of Poissonized statistics. 

For \(k \geq 1\) and \(A,B \in \mathcal{B}[0,\infty)\) with \(|A| < \infty\) and \(|B| < \infty\), define
\[
R_{B,k}^{\Pi,*} - R_{A,k}^{\Pi,*}
= \sum_{i=1}^{\infty} \Big( \mathbf{1}\big(\Pi_i(B) \geq k\big) - \mathbf{1}\big(\Pi_i(A) \geq k\big) \Big)
=\sum_{i=1}^{\infty} \mathbf{1}_{i,k}(A,B),
\]
where \(\mathbf{1}_{i,k}(A,B):=\mathbf{1}\big(\Pi_i(B) \geq k\big) - \mathbf{1}\big(\Pi_i(A) \geq k\big)\), and set \(E_{i,k}(A,B) := \mathbb{E}\,\mathbf{1}_{i,k}(A,B)\). For sets \(A,B\subset \mathbb{R}\), we denote their symmetric difference by 
\[
A \Delta B := (A \setminus B) \cup (B \setminus A).
\]

\begin{lemma}\label{L_F_1}
    Let  \(i, k \geq 1\) and \(A,B \in \mathcal{B}[0,\infty)\). Suppose that \(|A| < \infty\) and \(|B| < \infty\), then
    \begin{equation}\label{E_1}
        |{\bf 1}_{i,k}(A,B)|\le_\text{a.s.} {\bf 1}(\Pi_i(A \Delta  B)\ge 1),
    \end{equation}
    
    \begin{equation}\label{E_2}
        |E_{i,k}(A,B)|\le {\bf P}(\Pi_i(A\Delta B)\ge 1)\le p_i|A \Delta B|,
    \end{equation}
   and for any $m\ge 1$
    \begin{equation}\label{E_3}
       \mathbb{E} |{\bf 1}_{i,k}(A,B) - E_{i,k}(A,B)|^m\le (2^m+1){\bf P}(\Pi_i(A\Delta B)\ge 1).
    \end{equation}
    
\end{lemma}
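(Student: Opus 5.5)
The plan is to compare the two counts $\Pi_i(A)$ and $\Pi_i(B)$ pathwise, then take expectations and moments. Both counts come from the same Poisson points lying in urn $i$.

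For \eqref{E_1}, I would use $\Pi_i(A)=\Pi_i(A\cap B)+\Pi_i(A\setminus B)$ and $\Pi_i(B)=\Pi_i(A\cap B)+\Pi_i(B\setminus A)$. All counts are a.s. finite because $|A|,|B|<\infty$. Consider the event $\{\Pi_i(A\Delta B)=0\}$. On it, $\Pi_i(A\setminus B)=\Pi_i(B\setminus A)=0$, so $\Pi_i(A)=\Pi_i(B)=\Pi_i(A\cap B)$ and hence ${\bf 1}_{i,k}(A,B)=0$. On the complementary event we only need the trivial bound $|{\bf 1}_{i,k}(A,B)|\le 1$, since it is a difference of two indicators. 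Together these give \eqref{E_1} almost surely.

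For \eqref{E_2}, I would take expectations in \eqref{E_1} and use $|\mathbb{E}X|\le\mathbb{E}|X|$. This gives $|E_{i,k}(A,B)|\le \mathbf{P}(\Pi_i(A\Delta B)\ge1)$. The variable $\Pi_i(A\Delta B)$ is Poisson with parameter $p_i|A\Delta B|$. Therefore this probability equals $1-e^{-p_i|A\Delta B|}\le p_i|A\Delta B|$, using $1-e^{-x}\le x$.

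For \eqref{E_3}, write $X={\bf 1}_{i,k}(A,B)$, $E=E_{i,k}(A,B)$ and $P=\mathbf{P}(\Pi_i(A\Delta B)\ge1)$. Note that $|X|\le1$ and $|E|\le P\le 1$. I would split according to the event $\{\Pi_i(A\Delta B)\ge 1\}$.
\begin{itemize}
\item On this event, $|X-E|\le 2$, so its contribution to the $m$-th moment is at most $2^mP$.
\item On the complement, $X=0$ by \eqref{E_1}, so $|X-E|^m=|E|^m\le |E|\le P$, using $|E|\le 1$ and $m\ge1$. Its contribution is therefore at most $P$.
\end{itemize}
Adding the two pieces gives $\mathbb{E}|X-E|^m\le (2^m+1)P$.

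I expect no real obstacle; the statement is elementary. The only points needing care are the pathwise identity behind \eqref{E_1}, which uses a common Poisson configuration for both sets, and remembering that $|E|\le1$ in the last step.
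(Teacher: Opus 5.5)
Your proof is correct and follows the paper's argument: the same pathwise comparison of $\Pi_i(A)$ and $\Pi_i(B)$ for \eqref{E_1} (you argue directly that both counts coincide on $\{\Pi_i(A\Delta B)=0\}$, rather than decomposing over the value of $\Pi_i(A\cap B)$ as the paper does), the same bound $1-e^{-x}\le x$ for \eqref{E_2}, and the same split on the event $\{\Pi_i(A\Delta B)\ge 1\}$ for \eqref{E_3}. Your use of $|E|$ in the last step is slightly more careful than the paper's display, which writes $E_{i,k}(A,B)$ without absolute values even though this quantity can be negative.
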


\begin{proof}
    Note that 
    \begin{align*}
        {\bf 1}_{i,k}(A,B)=\sum_{j=0}^{k-1} {\bf 1}(\Pi_i(AB)=j) \bigg[{\bf 1}&(\Pi_i(B\overline{A})\ge k-j), \Pi_i(A\overline{B})< k-j)) \\
    - {\bf 1}&(\Pi_i(B\overline{A})< k-j), 
    \Pi_i(A\overline{B})\ge k-j)\bigg]. 
    \end{align*}
    Therefore, with probability one
    \begin{align}\label{E_I}
        |{\bf 1}_{i,k}(A,B)|\le 
     \max \bigg({\bf 1}(\Pi_i(B\overline{A})\ge 1),{\bf 1}(\Pi_i(A\overline{B})\ge 1)\bigg)=  
     {\bf 1}(\Pi_i(A \Delta  B)\ge 1).  
    \end{align}
     Inequality \eqref{E_2} follows from \eqref{E_I}, since
    \[
    {\bf P}(\Pi_i(A\Delta B)\ge 1)=1-e^{-p_i |A \Delta B|} \le p_i|A \Delta B|.
    \]
    Moreover, by \eqref{E_I} and \eqref{E_2} we have
    \begin{align*}
       \mathbb{E} |{\bf 1}_{i,k}(A,B) -  E_{i,k}(A,B)|^m 
       &\le (1 + E_{i,k}(A,B))^m {\bf P}(\Pi_i(A\Delta B)\ge 1) +(E_{i,k}(A,B))^m \\
       &\le (2^m+1){\bf P}(\Pi_i(A\Delta B)\ge 1).
    \end{align*} 
    
\end{proof}

\begin{corollary}\label{C_F_1}
    We have 
    \begin{align*}
        |R_{B,k}^{\Pi,*} - R_{A,k}^{\Pi,*}|\le_\text{a.s.} R_{A \Delta  B,1}^{\Pi,*}.
    \end{align*}
\end{corollary}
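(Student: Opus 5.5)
The plan is to sum the pointwise bound \eqref{E_1} of Lemma~\ref{L_F_1} over urns and identify the right-hand side with the Poissonized count for the set \(A\Delta B\). By the definition preceding Lemma~\ref{L_F_1},
\[
R_{B,k}^{\Pi,*} - R_{A,k}^{\Pi,*}=\sum_{i=1}^{\infty}\mathbf{1}_{i,k}(A,B).
\]
This series is well defined: since \(|A|,|B|<\infty\), both \(R_{A,k}^{\Pi,*}\) and \(R_{B,k}^{\Pi,*}\) are bounded by the almost surely finite total number of Poisson points in \(A\) and \(B\) respectively. Hence only finitely many terms are nonzero almost surely, and no convergence issue arises.

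Next I apply the triangle inequality termwise and then \eqref{E_1} for each \(i\). The bounds \eqref{E_1} hold almost surely for each \(i\), and there are countably many \(i\), so they hold simultaneously with probability one. This gives
\[
\bigl|R_{B,k}^{\Pi,*} - R_{A,k}^{\Pi,*}\bigr|\le \sum_{i=1}^\infty |\mathbf{1}_{i,k}(A,B)| \le \sum_{i=1}^\infty \mathbf{1}\bigl(\Pi_i(A\Delta B)\ge 1\bigr)\quad\text{a.s.}
\]

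Finally, I recognize the last sum as \(R_{A\Delta B,1}^{\Pi,*}\): it counts the urns hit by at least one Poisson point lying in \(A\Delta B\), which is exactly the definition of the Poissonized statistic with \(k=1\) for the Borel set \(A\Delta B\). Here \(A\Delta B\) is Borel with \(|A\Delta B|\le|A|+|B|<\infty\), and the notation is the same one already used for general Borel sets of finite measure, as in Proposition~\ref{corr}.

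There is no real obstacle. The only point needing care is that the almost-sure inequality \eqref{E_1} must hold simultaneously for all \(i\), and this follows from countable additivity of null sets.
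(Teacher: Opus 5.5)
Your proposal is correct and is the paper's argument: the paper's proof says only that the statement follows from \eqref{E_1}, which means summing the urnwise bound over $i$ and recognizing $\sum_{i}\mathbf{1}(\Pi_i(A\Delta B)\ge 1)$ as $R_{A\Delta B,1}^{\Pi,*}$. Your remarks on the finiteness of the series (from $|A|,|B|<\infty$) and on the countably many almost-sure bounds holding simultaneously make explicit what the paper leaves implicit.
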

\begin{proof}
    The statement follows from \eqref{E_1}.
\end{proof}

For \(d \geq 1\), we parameterize finite unions of \(d\) intervals in \([0,1]\) by
\[
\mathbf{t} = (\underline{t}_1, \dots, \underline{t}_d, \overline{t}_1, \dots, \overline{t}_d) \in \mathcal{I}_d \subset [0,1]^{2d},
\]
where \(\mathcal{I}_d := \{\underline{t}_i \leq \overline{t}_i \text{ for all } 1 \leq i \leq d\}\). For \(\mathbf{x} \in \mathbb{R}^{2d}\), define
\[
\|\mathbf{x}\| := \|\mathbf{x}\|_d := \max_{1 \leq i \leq 2d} |x_i|.
\]
For \(a,b \geq 0\), let \(I(a,b) := [a,b]\mathbf{1}(a \leq b) + [b,a]\mathbf{1}(a > b)\). For \(\mathbf{s}, \mathbf{t} \in \mathcal{I}_d\), define the associated sets
\[
A_{\mathbf{t}} := \bigcup_{i=1}^d [\underline{t}_i, \overline{t}_i], \qquad \mathbf{s} \Delta \mathbf{t} := A_{\mathbf{s}} \Delta A_{\mathbf{t}}.
\]

\begin{lemma}\label{L_F_2}
Let \(\mathbf{s}, \mathbf{t} \in \mathcal{I}_d\) for \(d \geq 1\). Then
\[
\mathbf{s} \Delta \mathbf{t} \subseteq \bigcup_{i=1}^d \bigl( I(\underline{s}_i, \underline{t}_i) \cup I(\overline{s}_i, \overline{t}_i) \bigr),
\]
and \(|\mathbf{s} \Delta \mathbf{t}| \leq 2d \|\mathbf{s} - \mathbf{t}\|\).
\end{lemma}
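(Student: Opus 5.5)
The inclusion is proved pointwise, and the measure bound then follows from subadditivity of Lebesgue measure. Set $U := \bigcup_{i=1}^d \bigl( I(\underline{s}_i,\underline{t}_i) \cup I(\overline{s}_i,\overline{t}_i) \bigr)$. By the symmetry of the statement in $\mathbf{s}$ and $\mathbf{t}$, it suffices to show $A_{\mathbf{s}} \setminus A_{\mathbf{t}} \subseteq U$.

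Take $x \in A_{\mathbf{s}} \setminus A_{\mathbf{t}}$. Then $x \in [\underline{s}_i, \overline{s}_i]$ for some index $i$, while $x \notin [\underline{t}_i, \overline{t}_i]$, since that interval lies in $A_{\mathbf{t}}$. This gives two cases.
\begin{itemize}
\item If $x < \underline{t}_i$, then $\underline{s}_i \le x < \underline{t}_i$, so $x \in [\underline{s}_i, \underline{t}_i] = I(\underline{s}_i, \underline{t}_i)$.
\item If $x > \overline{t}_i$, then $\overline{t}_i < x \le \overline{s}_i$, so $x \in I(\overline{s}_i, \overline{t}_i)$.
\end{itemize}
In both cases $x \in U$. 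Only the single index $i$ is used: the other intervals of $A_{\mathbf{t}}$ merely shrink $A_{\mathbf{s}} \setminus A_{\mathbf{t}}$ further, so overlaps between intervals cause no difficulty.

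For the measure bound, monotonicity and finite subadditivity of Lebesgue measure give
\[
|\mathbf{s} \Delta \mathbf{t}| \le \sum_{i=1}^d \bigl( |\underline{s}_i - \underline{t}_i| + |\overline{s}_i - \overline{t}_i| \bigr).
\]
Each of the $2d$ terms is at most $\|\mathbf{s} - \mathbf{t}\|$, so $|\mathbf{s} \Delta \mathbf{t}| \le 2d\,\|\mathbf{s} - \mathbf{t}\|$.

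The argument has no real obstacle. The only point needing care is the observation that the matching interval $[\underline{t}_i, \overline{t}_i]$ of $A_{\mathbf{t}}$ already suffices, and this holds even when that interval is a single point or is empty in effect.
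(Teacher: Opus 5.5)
Your proof is correct. It shares the paper's skeleton: compare the $i$-th interval of $A_{\mathbf{s}}$ only with the $i$-th interval of $A_{\mathbf{t}}$. The single-interval step, however, is done differently.

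The paper treats $d=1$ by computing $|A_{\mathbf{s}}\Delta A_{\mathbf{t}}|$ in two cases. For overlapping intervals it equals $|\underline{s}_1-\underline{t}_1|+|\overline{s}_1-\overline{t}_1|$. For disjoint intervals it is the sum of the two lengths, hence $<2\|\mathbf{s}-\mathbf{t}\|$. It then passes to general $d$ via $\bigl(\bigcup_i A_i\bigr)\Delta\bigl(\bigcup_i B_i\bigr)\subseteq\bigcup_i (A_i\Delta B_i)$.

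You instead prove the set inclusion pointwise for all $d$ at once. You then read off the measure bound from subadditivity and $|I(a,b)|=|a-b|$.

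What your route buys: it avoids the overlap/disjoint case split. It also explicitly proves the first assertion of the lemma, which the paper's proof leaves implicit. The paper's $d=1$ argument only computes measures, so $[\underline{s}_1,\overline{s}_1]\Delta[\underline{t}_1,\overline{t}_1]\subseteq I(\underline{s}_1,\underline{t}_1)\cup I(\overline{s}_1,\overline{t}_1)$ is never written down, notably in the disjoint case.

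What the paper's route buys: the exact value of the measure for $d=1$, which is not needed later.

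Your symmetry reduction is legitimate, since $I(a,b)=I(b,a)$ as sets and $\|\mathbf{s}-\mathbf{t}\|=\|\mathbf{t}-\mathbf{s}\|$.
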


\begin{proof}
First, suppose \(d=1\). If \(A_{\mathbf{s}} \cap A_{\mathbf{t}} \neq \emptyset\), then
\[
|A_{\mathbf{s}} \Delta A_{\mathbf{t}}| = |\underline{s}_1 - \underline{t}_1| + |\overline{s}_1 - \overline{t}_1| \leq 2 \|\mathbf{s} - \mathbf{t}\|.
\]
Otherwise, \(A_{\mathbf{s}} \cap A_{\mathbf{t}} = \emptyset\), so
\[
|A_{\mathbf{s}} \Delta A_{\mathbf{t}}| = |\underline{s}_1 - \overline{s}_1| + |\underline{t}_1 - \overline{t}_1| < 2 \|\mathbf{s} - \mathbf{t}\|.
\]
The general case follows since the symmetric difference of unions is contained in the union of symmetric differences.
\end{proof}

\begin{corollary}\label{C_F_2}
Let \(\mathbf{s}, \mathbf{t} \in \mathcal{I}_d\) for \(d \geq 1\) and \(n \geq 1\). Then
\[
|R_{n A_{\mathbf{s}},k}^{\Pi,*} - R_{n A_{\mathbf{t}},k}^{\Pi,*}| \leq_\text{a.s.} R_{n(\mathbf{s} \Delta \mathbf{t}),1}^{\Pi,*} \leq_d R_{2nd \|\mathbf{s} - \mathbf{t}\|,1}^{\Pi,*}.
\]
\end{corollary}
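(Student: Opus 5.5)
The plan is to combine the almost-sure bound of Corollary~\ref{C_F_1} with the geometric estimate of Lemma~\ref{L_F_2}, and then replace the stochastic comparison by an explicit monotone coupling. Throughout, \(nA_{\mathbf{s}}\), \(nA_{\mathbf{t}}\) and \(n(\mathbf{s}\Delta\mathbf{t})\) are Borel subsets of \([0,n]\), so all Poissonized quantities are well defined. Moreover, \(n(A_{\mathbf{s}}\Delta A_{\mathbf{t}}) = (nA_{\mathbf{s}})\Delta(nA_{\mathbf{t}})\), because scaling by \(n>0\) is a bijection.

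First inequality. I apply Corollary~\ref{C_F_1} with \(A = nA_{\mathbf{s}}\) and \(B = nA_{\mathbf{t}}\). Both sets have finite Lebesgue measure, so the corollary gives, almost surely,
\[
|R_{nA_{\mathbf{s}},k}^{\Pi,*} - R_{nA_{\mathbf{t}},k}^{\Pi,*}| \le R_{n(\mathbf{s}\Delta\mathbf{t}),1}^{\Pi,*}.
\]
Here the Poissonized statistic indexed by a Borel set \(C\subset[0,\infty)\) is read, as in Section~\ref{sec4}, as \(\sum_i \mathbf{1}(\Pi_i(C)\ge k)\).

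Second inequality. By Lemma~\ref{L3}(B1) (the splitting property), \(R_{n(\mathbf{s}\Delta\mathbf{t}),1}^{\Pi,*}\) has the law of \(R_{\lambda,1}^{\Pi,*}\) with \(\lambda = n|\mathbf{s}\Delta\mathbf{t}|\). By Lemma~\ref{L_F_2}, \(\lambda \le \Lambda := 2nd\|\mathbf{s}-\mathbf{t}\|\). It therefore remains to show that \(\lambda\mapsto R_{\lambda,1}^{\Pi,*}\) is stochastically nondecreasing. For this I use the coupling on a single Poisson process: \([0,\lambda]\subseteq[0,\Lambda]\) gives \(\Pi_i([0,\lambda]) \le \Pi_i([0,\Lambda])\) for every \(i\), hence \(R_{\lambda,1}^{\Pi,*}\le R_{\Lambda,1}^{\Pi,*}\) pointwise. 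This is equivalently monotonicity of \(R^{\Pi,*}_{\cdot,1}\) as an outer measure (Lemma~\ref{L2}(ii)). Pointwise domination under a coupling with the correct marginals gives \(\le_d\).

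Degenerate case and main obstacle. If \(\|\mathbf{s}-\mathbf{t}\| = 0\), then \(\mathbf{s}=\mathbf{t}\) and both sides vanish, so the bound holds trivially. The only delicate point is the equality in law from Lemma~\ref{L3}(B1) when the set is not an interval. Since \(\mathbf{s}\Delta\mathbf{t}\) is a finite union of intervals, I would make this explicit by mapping \(n(\mathbf{s}\Delta\mathbf{t})\) measure-preservingly onto \([0,\lambda]\): concatenate its interval components in order. This transports the restricted Poisson process, together with its i.i.d. marks \(X_j\), to a rate-one Poisson process on \([0,\lambda]\) with the same marks. Hence the urn counts \(\Pi_i\) have the same joint law, and I expect no further difficulty.
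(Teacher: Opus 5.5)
Your proof is correct and follows the paper's route: you apply the almost-sure bound of Corollary~\ref{C_F_1} to $nA_{\mathbf{s}}, nA_{\mathbf{t}}$, then combine Lemma~\ref{L3}(B1) with Lemma~\ref{L_F_2} for the distributional comparison. Your explicit monotone coupling in $\lambda$, and the rearrangement argument justifying (B1) for a finite union of intervals, spell out the stochastic-monotonicity step that the paper's one-line proof leaves implicit.
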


\begin{proof}
This follows immediately from Corollary~\ref{C_F_1}, Lemma~\ref{L_F_2}, and Lemma~\ref{L3}(B1).
\end{proof}

\begin{lemma}\label{L_F_3}
Let \(L > 0\). For \(n \geq 1\), define
\[
\xi_n(L) := \sup_{0 \leq t \leq 1 - L/n} \bigl( \Pi(nt + L) - \Pi(nt) \bigr).
\]
Then \(\xi_n(L)/\sqrt{M(n)} \to 0\) almost surely as \(n \to \infty\).
\end{lemma}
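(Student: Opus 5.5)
The plan is to show that $\xi_n(L)$ grows at most logarithmically in $n$ almost surely, and that $\sqrt{M(n)}$ grows polynomially. The claim then follows by comparing the two rates.

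\emph{Step 1: a discretized bound.} Cover $[0,n]$ by the grid points $jL$, $j=0,\dots,\lceil n/L\rceil$. Any window $(nt,nt+L]$ with $0\le t\le 1-L/n$ is contained in some $(jL,(j+2)L]$. Hence
\[
\xi_n(L)\le \max_{0\le j\le \lceil n/L\rceil} \bigl(\Pi((j+2)L)-\Pi(jL)\bigr)=:\eta_n .
\]
Each increment $\Pi((j+2)L)-\Pi(jL)$ is Poisson with parameter $2L$.

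\emph{Step 2: a Poisson tail and a union bound.} Fix an integer $r\ge1$. By the Poisson tail estimate,
\[
\mathbb{P}(\mathrm{Poi}(2L)\ge r)\le (2L)^r/r! .
\]
Take $r_n=\lceil \log n\rceil$. A union bound over the $O(n)$ grid indices gives
\[
\mathbb{P}(\eta_n\ge r_n)\le C n (2L)^{r_n}/r_n! .
\]
Stirling's formula gives $r_n!\ge (r_n/e)^{r_n}$, so the bound is at most $C n\,(2Le/\log n)^{\log n}$. This decays faster than any power of $n$, so the bounds are summable in $n$. By Borel--Cantelli, almost surely $\xi_n(L)\le \eta_n<\lceil\log n\rceil$ for all large $n$. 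Since only the index $n$ varies and the estimate is summable over all $n$, no monotonicity or subsequence argument is needed.

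\emph{Step 3: growth of $M(n)$.} This is the only place where the model enters. The lemma does not restate the regularity condition, but in the surrounding setting \eqref{reg} holds with $\theta\in(0,1)$. Then \eqref{conv} gives $M(n)\sim\Gamma(1-\theta)n^\theta L(n)$. Since $L$ is slowly varying, $L(n)\ge n^{-\theta/2}$ eventually, so $\sqrt{M(n)}\ge c\,n^{\theta/4}$ for large $n$. Combining with Step 2,
\[
\xi_n(L)/\sqrt{M(n)}\le \lceil\log n\rceil/(c\,n^{\theta/4})\to0
\]
almost surely.

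The main obstacle is Step 3 if the lemma is meant to hold without \eqref{reg}. In that case one only knows $M(n)\to\infty$, possibly very slowly, for example like $\log\log n$, and a logarithmic bound on $\xi_n$ is insufficient. If that generality is required, I would first check whether the claim can fail at all, since Erd\H{o}s--R\'enyi-type laws give $\xi_n(L)\asymp \log n/\log\log n$ almost surely. I expect the intended setting to be $\theta\in(0,1)$, as in Theorem~\ref{CLT} and the FCLT that the lemma serves, and I would state and use it under that assumption.
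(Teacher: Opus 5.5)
Your proof is correct and follows the paper's route: bound $\xi_n(L)$ by a maximum of $O(n)$ Poisson increments on a grid of mesh $L$, then use Poisson tails to get a logarithmic almost-sure bound. The paper uses $2\max_k \eta_k$ with length-$L$ blocks where you use length-$2L$ windows, and you supply the union-bound and Borel--Cantelli details that the paper leaves implicit. You are also right that the conclusion needs \eqref{reg} with $\theta>0$, so that $\sqrt{M(n)}$ grows polynomially. The paper leaves this assumption implicit, and without it the lemma can fail: for geometric $p_i$ one has $M(n)\asymp\log n$, while $\xi_n(L)$ grows like $\log n/\log\log n$.
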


\begin{proof}
Partition \([0,n]\) into \(\lceil n/L \rceil\) subintervals of length \(L\) (except possibly the last). Let \(\eta_k := \Pi(kL) - \Pi((k-1)L)\) for \(k \geq 1\). Then the \(\{\eta_k\}_{k \geq 1}\) are i.i.d. Poisson distributed random variables with parameter \(L\). Clearly, \(\xi_n(L) \leq_\text{a.s.} 2 \max\{\eta_1, \dots, \eta_{\lceil n/L \rceil}\}\).  The claim follows from the tail asymptotics of the Poisson maximum. 
\end{proof}

Recall that \(n\mathbf{t} = (n\underline{t}_1, \dots, n\underline{t}_d, n\overline{t}_1, \dots, n\overline{t}_d)\) for \(\mathbf{t} \in \mathcal{I}_d, \ n\ge 1\). For \(\delta > 0\), let \(\mathcal{I}_{d,\delta} \subset [0,1+\delta]^{2d}\) be the set of points \(\mathbf{s}\), such that \(\underline{s}_i \leq \overline{s}_i\) for all \(1 \leq i \leq d\). Define
\begin{align*}
\Pi(n\mathbf{s}) &:= \bigl( \Pi(n\underline{s}_1), \dots, \Pi(n\underline{s}_d), \Pi(n\overline{s}_1), \dots, \Pi(n\overline{s}_d) \bigr), \\
[n\mathbf{t}] &:= \bigl( [n\underline{t}_1], \dots, [n\underline{t}_d], [n\overline{t}_1], \dots, [n\overline{t}_d] \bigr).
\end{align*}

\begin{lemma}\label{L_F_4}
For any \(\varepsilon, \delta \in (0,1)\), there exists \(n_0 = n_0(\varepsilon, \delta)\) such that for all \(n \geq n_0\),
\[
\mathbf{P}\Bigl( \forall \mathbf{t} \in \mathcal{I}_d \, \exists \mathbf{s} \in \mathcal{I}_{d,\delta} : \|\mathbf{s} - \mathbf{t}\| \leq \delta,\, \Pi(n\mathbf{s}) = [n\mathbf{t}] \Bigr) =: \mathbf{P}(A(n)) \geq 1 - \varepsilon.
\]
\end{lemma}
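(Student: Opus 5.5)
The plan is to reduce the statement to a uniform law of large numbers for the Poisson process. By the functional strong law of large numbers for the unit-rate Poisson process, \(\sup_{0\le u\le 1+\delta}|\Pi(nu)/n-u|\to 0\) almost surely. In particular, for all large \(n\), with probability at least \(1-\varepsilon\),
\[
\sup_{0\le u\le 1+\delta}\Bigl|\frac{\Pi(nu)}{n}-u\Bigr|\le \frac{\delta}{2}.
\]
Call this event \(G(n)\). I will show that \(G(n)\subseteq A(n)\), at least for \(n\ge n_1(\delta)\) chosen so that \(1/n<\delta/2\).

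On \(G(n)\), I construct \(\mathbf{s}\) coordinatewise. For each integer \(j\in\{0,1,\dots,n\}\), I need some \(u\in[0,1+\delta]\) with \(\Pi(nu)=j\) and \(|u-j/n|\le\delta\). The path \(u\mapsto \Pi(nu)\) is a right-continuous counting path. It starts at \(0\), jumps by exactly one almost surely, and on \(G(n)\) we have \(\Pi(n(1+\delta))\ge n(1+\delta/2)>n\). So every level \(j\le n\) is attained, and the set \(\{u:\Pi(nu)=j\}\) is a nonempty interval \([T_j/n,T_{j+1}/n)\), with \(T_0:=0\). I take \(u_j:=T_j/n\), the first time level \(j\) is reached, and set
\[
s=u_{[nt]}
\]
for each coordinate \(t\) of \(\mathbf{t}\).

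The closeness \(|s-t|\le\delta\) follows from \(G(n)\). Indeed, \(\Pi(n u_j)=j\) gives \(|j/n-u_j|\le\delta/2\), and \(|j/n-t|\le 1/n\) for \(j=[nt]\). Hence \(|s-t|\le\delta/2+1/n\le\delta\) once \(n\ge n_1\). Also \(u_j\le 1+\delta\), so \(\mathbf{s}\in[0,1+\delta]^{2d}\).

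The ordering constraint \(\underline{s}_i\le\overline{s}_i\) holds because \(j\mapsto u_j\) is nondecreasing and \([n\underline{t}_i]\le[n\overline{t}_i]\). Hence \(\mathbf{s}\in\mathcal{I}_{d,\delta}\). Moreover, \(\Pi(n\mathbf{s})=[n\mathbf{t}]\) by construction. This holds simultaneously for all \(\mathbf{t}\), since the choice depends only on the integer vector \([n\mathbf{t}]\), so \(G(n)\subseteq A(n)\).

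The main care point is the uniform approximation step together with the attainment of every level \(j\le n\). The extension of the domain to \([0,1+\delta]\) is exactly what guarantees that the level \(n\) (for \(t=1\)) is reached. Measurability of \(A(n)\) is not an issue, since it contains the measurable event \(G(n)\); if needed, one can state the bound for the inner probability. Finally, take \(n_0=\max(n_1,n_2)\), where \(n_2\) is such that \(\mathbf{P}(G(n))\ge1-\varepsilon\) for \(n\ge n_2\). The existence of \(n_2\) follows from almost sure (hence in probability) convergence in the functional SLLN. Alternatively, \(n_2\) can be obtained directly from Doob's maximal inequality applied to the martingale \(\Pi(u)-u\), which gives \(\mathbf{P}(\sup_{u\le n(1+\delta)}|\Pi(u)-u|>n\delta/2)\le 4(1+\delta)/(n\delta^2/4)\to0\).
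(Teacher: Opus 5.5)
Your proof is correct. It follows the same coordinatewise reduction as the paper but is self-contained where the paper cites. The paper takes the one-dimensional statement as a black box from Lemma~1(iii) of \cite{Chebunin2016}: with probability at least \(1-\varepsilon/(2d)\), every \(t\in[0,1]\) has some \(\tau\in[0,1+\delta]\) with \(|\tau-t|\le\delta/(2d)\) and \(\Pi(n\tau)=[nt]\). It applies this to each endpoint, appeals to monotonicity of \(\Pi\) for \(\underline{s}_i\le\overline{s}_i\), and finishes with a union bound over \(i\).

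You instead prove the one-dimensional fact directly, from the uniform law of large numbers or Doob's inequality for \(\Pi(u)-u\). You also make a canonical choice, the first-passage time \(s=T_{[nt]}/n\). This buys you two things.

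\begin{itemize}
\item Your good event \(G(n)\) is a single event about the path, and your choice of \(\mathbf{s}\) depends only on \([n\mathbf{t}]\). So it is transparent that no union bound over coordinates is needed. The paper's one-dimensional event is likewise uniform in \(t\), so its union bound is harmless but superfluous.
\item Because \(j\mapsto T_j\) is increasing, the ordering \(\underline{s}_i\le\overline{s}_i\) holds automatically. This includes the tie case \([n\underline{t}_i]=[n\overline{t}_i]\), where monotonicity of \(\Pi\) alone does not order two independently chosen preimages of the same level; one has to pick them consistently, which your construction does.
\end{itemize}

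The paper's route is shorter given the citation. Yours makes the lemma independent of \cite{Chebunin2016} and gives an explicit bound \(\mathbf{P}(G(n)^c)=O\bigl(1/(n\delta^2)\bigr)\).
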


\begin{proof}
By \cite[Lemma 1(iii)]{Chebunin2016}, for any \(d \geq 1\) and \(\varepsilon, \delta \in (0,1)\), there exists \(n_0'\) such that for \(n \geq n_0'\),
\[
\mathbf{P}\Bigl( \forall t \in [0,1] \, \exists \tau \in [0,1+\delta] : |\tau - t| \leq \delta/(2d), \, \Pi(n\tau) = [nt] \Bigr) \geq 1 - \varepsilon/(2d).
\]
Thus, there exists \(n_0\) such that for \(n \geq n_0\) and any \(\underline{t}_i \leq \overline{t}_i\) (\(1 \leq i \leq d\)), there exist \(\underline{s}_i, \overline{s}_i\) satisfying
\[
|\underline{s}_i - \underline{t}_i| \leq \frac{\delta}{2d}, \quad |\overline{s}_i - \overline{t}_i| \leq \frac{\delta}{2d}, \quad \Pi(n\underline{s}_i) = [n\underline{t}_i], \quad \Pi(n\overline{s}_i) = [n\overline{t}_i],
\]
each with probability at least \(1 - \varepsilon/d\). Monotonicity of \(\Pi\) ensures \(\underline{s}_i \leq \overline{s}_i\), so \(\mathbf{s} \in \mathcal{I}_{d,\delta}\) and \(\|\mathbf{s} - \mathbf{t}\| \leq \delta\). The result follows by union bound over \(i=1,\dots,d\).
\end{proof}

For \(\mathbf{t} \in \mathcal{I}_d\), \(n \geq 1\), and \(k \geq 1\), define the standardized processes
\begin{equation}\label{Y_t_P}
Y_{n,k}^{\Pi,*}(\mathbf{t}) := \frac{R_{n A_{\mathbf{t}},k}^{\Pi,*} - \mathbb{E} R_{n A_{\mathbf{t}},k}^{\Pi,*}}{\sqrt{M(n)}}, \qquad
Y_{n,k}^{*}(\mathbf{t}) := \frac{R_{n A_{\mathbf{t}},k}^{*} - \mathbb{E} R_{n A_{\mathbf{t}},k}^{*}}{\sqrt{M(n)}}.
\end{equation}
For a function \(f: \mathcal{I}_d \to \mathbb{R}\), let \(\omega_f(\delta) := \sup_{\|\mathbf{s} - \mathbf{t}\| \leq \delta} |f(\mathbf{s}) - f(\mathbf{t})|\) be its modulus of continuity. 

\begin{theorem}[Multidimensional FCLT]\label{FCLT}
Let \(d, m \geq 1\), \(k_1, \dots, k_m \geq 1\), and assume the regularity condition \eqref{reg} holds for some \(\theta \in (0,1)\). Then, as \(n \to \infty\),
\begin{equation}
\bigl( Y_{n,k_1}^{\Pi,*}(\mathbf{t}_1), \dots, Y_{n,k_m}^{\Pi,*}(\mathbf{t}_m) : \mathbf{t}_j \in \mathcal{I}_d,\, j=1,\dots,m \bigr)
\Rightarrow \bigl( Y_{k_1}^*(\mathbf{t}_1), \dots, Y_{k_m}^*(\mathbf{t}_m) : \mathbf{t}_j \in \mathcal{I}_d \bigr),\nonumber
\end{equation}
and
\begin{equation}
\bigl( Y_{n,k_1}^{*}(\mathbf{t}_1), \dots, Y_{n,k_m}^{*}(\mathbf{t}_m) : \mathbf{t}_j \in \mathcal{I}_d,\, j=1,\dots,m \bigr)
\Rightarrow \bigl( Y_{k_1}^*(\mathbf{t}_1), \dots, Y_{k_m}^*(\mathbf{t}_m) : \mathbf{t}_j \in \mathcal{I}_d \bigr).\nonumber
\end{equation}
Here, the limit is an \(m\)-dimensional Gaussian field on \(\mathcal{I}_d\) with mean zero and covariance function given by \eqref{K*k1} in Theorem~\ref{CLT}. Moreover, the limiting field has continuous paths almost surely. 
\end{theorem}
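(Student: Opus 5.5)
The plan is to prove the Poissonized statement first and then transfer it to the original processes via the random time change of Lemma~\ref{L_F_4}. For the Poissonized field, convergence of finite-dimensional distributions is exactly Theorem~\ref{CLT} applied to the finitely many sets $A_{\mathbf{t}_j}$, with $t=n$. What remains is tightness in the space of bounded functions on $\mathcal{I}_d$, equipped with the uniform metric and with continuous limits. For this I will verify an asymptotic equicontinuity condition: for every $\varepsilon>0$,
\[
\lim_{\delta\to0}\limsup_{n\to\infty}\mathbf{P}\bigl(\omega_{Y^{\Pi,*}_{n,k}}(\delta)>\varepsilon\bigr)=0 .
\]

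The increments are controlled through Corollary~\ref{C_F_1} and Lemma~\ref{L_F_2}. For $\|\mathbf{s}-\mathbf{t}\|\le\delta$, the difference $R^{\Pi,*}_{nA_{\mathbf{s}},k}-R^{\Pi,*}_{nA_{\mathbf{t}},k}$ is bounded by the number of urns hit during $n$ times the union of $2d$ intervals of length at most $\delta$ around the endpoints. The expectations differ by at most $\mathbb{E}R^{\Pi,*}_{2nd\delta,1}=M(2nd\delta)\sim(2d\delta)^\theta M(n)$.

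To make the supremum tractable, I chain over a grid. Fix a mesh $h=\delta$ and grid points $g_l=lh$. For $\mathbf{s}$ and $\mathbf{t}$ whose coordinates lie in common grid cells, the symmetric difference is contained in the union of at most $2d$ cells $[g_l,g_{l+1}]$. By subadditivity (Lemma~\ref{L2}(ii)),
\[
|R^{\Pi,*}_{nA_{\mathbf{s}},k}-R^{\Pi,*}_{nA_{\mathbf{t}},k}|\le 2d\max_l R^{\Pi,*}_{n[g_l,g_{l+1}],1}.
\]
Centering gives $\max_l |R^{\Pi,*}_{n[g_l,g_{l+1}],1}-M(nh)| + M(nh)$. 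The second term contributes $2d\,M(nh)/\sqrt{M(n)}$, which grows like $\sqrt{M(n)}\,h^\theta$ and so does \emph{not} vanish.

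This is the main obstacle: the crude bound by the full count of urns in the difference is of order $M(n)h^\theta$, much larger than $\sqrt{M(n)}$. So I must control the \emph{centered} increments themselves, not bound them by counts. I would do this with a genuine chaining or moment argument on the centered sums $\sum_i(\mathbf{1}_{i,k}(A,B)-E_{i,k}(A,B))$, which have independent summands over $i$. Rosenthal's inequality together with \eqref{E_3} gives
\[
\mathbb{E}\bigl|Y^{\Pi,*}_{n,k}(\mathbf{s})-Y^{\Pi,*}_{n,k}(\mathbf{t})\bigr|^{2p}\le C\Bigl(\bigl(M(n|\mathbf{s}\Delta\mathbf{t}|)/M(n)\bigr)^p+M(n|\mathbf{s}\Delta\mathbf{t}|)/M(n)^{p}\Bigr).
\]
By Potter bounds, the first term is at most $C\|\mathbf{s}-\mathbf{t}\|^{p\theta'}$ for any $\theta'<\theta$, provided $n\|\mathbf{s}-\mathbf{t}\|\ge1$. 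Choosing $p$ with $p\theta'>2d$, a Kolmogorov--Chentsov/Billingsley-type chaining over dyadic grids of mesh down to about $n^{-1}$ handles the grid part. The second term is handled by taking $p$ large, since $M(n)\to\infty$ polynomially. Below the scale $1/n$, increments are dominated by counts of Poisson points in windows of length $O(1)$, which are negligible by Lemma~\ref{L_F_3}. The same moment bound with Kolmogorov's criterion yields a continuous version of the Gaussian limit, since its covariance increments are bounded by $C\|\mathbf{s}-\mathbf{t}\|^\theta$ (Corollary~\ref{K*} and Proposition~\ref{corr}).

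For the non-Poissonized field, couple $R^*_{nA,k}$ with the Poissonized process on the same $X_i$'s. On the event $A(n)$ of Lemma~\ref{L_F_4}, which has probability at least $1-\varepsilon$, every $[n\mathbf{t}]$ equals $\Pi(n\mathbf{s})$ for some $\mathbf{s}\in\mathcal{I}_{d,\delta}$ with $\|\mathbf{s}-\mathbf{t}\|\le\delta$. Then $R^*_{nA_{\mathbf{t}},k}$ coincides with $R^{\Pi,*}_{nA_{\mathbf{s}},k}$, up to boundary balls whose number is controlled by Lemma~\ref{L_F_3}. On the mean side, $|\mathbb{E}R^*_{nA_{\mathbf{t}},k}-\mathbb{E}R^{\Pi,*}_{nA_{\mathbf{t}},k}|=o(\sqrt{M(n)})$ by standard de-Poissonization, since $\mathbb{E}R^*_{m,k}$ and $M$-type functions differ by $O(M(m)/m)$. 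Combining this with the equicontinuity of $Y^{\Pi,*}$ on the slightly enlarged domain $\mathcal{I}_{d,\delta}$ gives
\[
\sup_{\mathbf{t}}\bigl|Y^*_{n,k}(\mathbf{t})-Y^{\Pi,*}_{n,k}(\mathbf{t})\bigr|\to0
\]
in probability as $n\to\infty$ and then $\delta\to0$, which proves the second convergence.
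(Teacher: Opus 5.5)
Your plan is the paper's: finite-dimensional convergence from Theorem~\ref{CLT}, and the Rosenthal bound built on \eqref{E_3} for increments with $\|\mathbf{s}-\mathbf{t}\|\ge 1/n$. The paper takes $\gamma=\lceil 8(d+1)/\theta\rceil$ to get $C\|\mathbf{s}-\mathbf{t}\|^{2(d+1)}$. Below scale $1/n$ you both use Lemma~\ref{L_F_3}, and your chaining down to mesh $1/n$ is what the paper imports as Davydov's Theorem~1. The transfer goes through the event $A(n)$ of Lemma~\ref{L_F_4}. Your Poissonized half is fine.

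The final combination step, however, does not follow as written. Put $g_n(\mathbf{u}):=\mathbb{E}R^{\Pi,*}_{nA_{\mathbf{u}},k}$. On $A(n)$ you have $R^*_{nA_{\mathbf{t}},k}=R^{\Pi,*}_{nA_{\mathbf{s}},k}+O(d)$, so
\[
Y^*_{n,k}(\mathbf{t})-Y^{\Pi,*}_{n,k}(\mathbf{t})=\bigl(Y^{\Pi,*}_{n,k}(\mathbf{s})-Y^{\Pi,*}_{n,k}(\mathbf{t})\bigr)+\frac{g_n(\mathbf{s})-g_n(\mathbf{t})}{\sqrt{M(n)}}+\frac{g_n(\mathbf{t})-\mathbb{E}R^{*}_{nA_{\mathbf{t}},k}}{\sqrt{M(n)}}+o(1).
\]
Equicontinuity controls the first term and your de-Poissonization controls the third. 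Nothing controls the second: it is a deterministic mean evaluated at the random point $\mathbf{s}$. From $\|\mathbf{s}-\mathbf{t}\|\le\delta$ alone, the best available bound is $M(2dn\delta)/\sqrt{M(n)}\asymp\delta^\theta\sqrt{M(n)}\to\infty$. This is the very obstacle you identified earlier for uncentered increments. The paper's proof passes over the same point by asserting $Y^*_{n,k}(\mathbf{t})=Y^{\Pi,*}_{n,k}(\mathbf{s})$ on the event, which ignores the centering.

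The missing ingredient is that the time change moves $n|A_{\mathbf{s}}|$ only on the local Poisson-fluctuation scale. With high probability, uniformly in integers $0\le a<b\le 2n$, exponential bounds and a union bound give $|T_b-T_a-(b-a)|\le C(\sqrt{(b-a)\log n}+\log n)$. Take $n\mathbf{s}$ to be the corresponding arrival times and set $N=\#(nA_{\mathbf{t}})$; then $|n|A_{\mathbf{s}}|-N|\le C(\sqrt{dN\log n}+d\log n)$.

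Now write $g_n(\mathbf{u})=f_k(n|A_{\mathbf{u}}|)$ with $f_k(x):=\mathbb{E}R^{\Pi,*}_{x,k}$, and use $\mathbb{E}R^*_{nA_{\mathbf{t}},k}=\mathbb{E}R^*_{N,k}$ from Lemma~\ref{L3}(A2). The relevant facts are $f_k'(x)=kM_k(x)/x\le kM(x)/x$, $f_k'\le 1$, and that $M(x)/x$ is decreasing. Together with de-Poissonization, the sum of the last two terms above is at most
\[
C\bigl(\max_{1\le N\le n}M(N)\sqrt{\log n/N}+\log n\bigr)/\sqrt{M(n)}\to 0 .
\]
This is the step where $\theta<1$ is actually used.

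The local form of the fluctuation bound matters. Bounding each endpoint shift separately by $O(\sqrt{n\log\log n})$, as in the SLLN proof, gives an estimate of order $M(\sqrt{n\log\log n})$ for intervals of length about $\sqrt{n}$. That need not be $o(\sqrt{M(n)})$.
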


\begin{proof}

To establish weak convergence to a continuous Gaussian field, we verify the conditions of \cite[Theorem 1]{Davydov2008}. Theorem~\ref{CLT} provides weak convergence of the finite-dimensional distributions for the Poissonized processes \(Y_{n,k}^{\Pi,*}(\mathbf{t})\). Fix any \(k \in \{k_1, \dots, k_m\}\). We first check Davydov's conditions for the Poissonized field \(Y_{n,k}^{\Pi,*}\), then show that \(Y_{n,k}^{\Pi,*}(\mathbf{t})\) and \(Y_{n,k}^{*}(\mathbf{t})\) are close in an appropriate sense. By Corollary~\ref{C_F_2} and Lemma~\ref{L_F_2}, we have 
\begin{align*}
    \omega_{Y_{n,k}^{\Pi,*}}(1/n)\le& \sup\limits_{\| {\bf s} -{\bf t}\|\le 1/n} \left( |R_{ n A_{\bf s},k}^{\Pi,*}-R_{ n A_{\bf t},k}^{\Pi,*}|+|\mathbb{E}( R_{ n A_{\bf s},k}^{\Pi,*}-R_{ n A_{\bf t},k}^{\Pi,*})|\right)/\sqrt{M(n)} \\
    \le & \sup\limits_{\| {\bf s} -{\bf t}\|\le 1/n} \left( R_{n( {\bf s} \Delta{\bf t}),1}^{\Pi,*}+ 4d\right)/\sqrt{M(n)} \le  2 d (\xi_n(2)+ 2)/\sqrt{M(n)}.
\end{align*}
 Therefore, $\omega_{Y_{n,k}^{\Pi,*}}(1/n)\to 0$ a.s. as $n\to\infty$ by Lemma \ref{L_F_3}.
  
Let ${\bf s}, {\bf t} \in {\mathcal I}_d$ such that $\| {\bf s} -{\bf t}\|\ge 1/n$ and $\gamma=\lceil 8(d+1)/\theta\rceil$. Denote by ${\bf 1}_{i,k}:= {\bf 1}_{i,k}(nA_{\bf s},nA_{\bf t})$ and $E_{i,k}:=\mathbb{E} {\bf 1}_{i,k}$.
Using independence of terms and  Rosenthal inequality we have
\begin{align*}
    \mathbb{E}|Y_{n, k}^{*}({\bf t})-Y_{n, k}^{*}({\bf s})|^{\gamma}\le& \frac{c(\gamma)}{(M(n))^{\gamma/2}}
\left(
\sum\limits_{i=1}^{\infty} \mathbb{E} | {\bf 1}_{i,k}-E_{i,k}|^{\gamma}+
\left( \sum\limits_{i=1}^{\infty} \mathbb{E} ( {\bf 1}_{i,k}-E_{i,k})^2\right)^{\gamma/2}\right) \\
\le&
 \frac{c(\gamma)3^\gamma}{(M(n))^{\gamma/2}}
\left( \mathbb{E} R_{n( {\bf s} \Delta{\bf t}),1}^{\Pi,*} + \left(
 \mathbb{E} R_{n( {\bf s} \Delta{\bf t}),1}^{\Pi,*} \right)^{\gamma/2}\right) \\
\le&
\frac{c(\gamma)3^\gamma}{(M(n))^{\gamma/2}}\left(4 d \|{\bf s} -{\bf t}\|n+
\left(\mathbb{E} R_{4 d \|{\bf s} -{\bf t}\|n,1}^{\Pi,*}\right)^{\gamma/2}\right)\\
\le &
C(\theta,d)\|{\bf s} -{\bf t}\|^{2(d+1)},
\end{align*}
where $C(\theta,d)$ depends on its arguments only. 
Above we’ve used Lemmas \ref{L_F_1} and \ref{L_F_2},  Corollary  \ref{C_F_2} and \cite[Lemma 1({i})]{Chebunin2016}. Therefore, the first assertion follows.

We now approximate the original process by its Poissonization. Relative compactness of \(\{Y_{n,k}^{\Pi,*}\}_{n \geq n_0, k \geq 1}\) implies that for every \(\varepsilon, \eta > 0\), there exist \(\delta \in (0,1)\) and \(n_0\) such that for all \(n \geq n_0\),
\[
\mathbf{P}\bigl( \omega_{Y_{n,k}^{\Pi,*}}(\delta) \geq \eta \bigr) \leq \varepsilon.
\]
On the event \(\{\Pi(n\mathbf{s}) = [n\mathbf{t}]\}\), we have \(Y_{n,k}^{*}(\mathbf{t}) = Y_{n,k}^{\Pi,*}(\mathbf{s})\). Thus, by Lemma~\ref{L_F_4},
\begin{align*}
\mathbf{P}\Bigl( \sup_{\mathbf{t} \in \mathcal{I}_d} \bigl| Y_{n,k}^{*}(\mathbf{t}) - Y_{n,k}^{\Pi,*}(\mathbf{t}) \bigr| \geq \eta \Bigr)
&\leq \mathbf{P}\Bigl( \sup_{\mathbf{t} \in \mathcal{I}_d} \bigl| Y_{n,k}^{*}(\mathbf{t}) - Y_{n,k}^{\Pi,*}(\mathbf{t}) \bigr| \geq \eta,\, A(n) \Bigr) + \mathbf{P}(A(n)^c) \\
&\leq \mathbf{P}\bigl( \omega_{Y_{n,k}^{\Pi,*}}(\delta) \geq \eta \bigr) + \varepsilon \leq 2\varepsilon.
\end{align*}
This bound controls the approximation error. 
 
\end{proof}

We now restrict the random bounded outer measure \(R_{n\cdot,1}^*\) to \(\mathcal{B}^{\mathrm{fin}}[0,1]\), the subalgebra generated by finite unions of intervals (including possibly isolated points). Thus, every \(A \in \mathcal{B}^{\mathrm{fin}}[0,1]\) is a finite union of open intervals and points. The next lemma follows directly from the definitions of \(\mathcal{B}^{\mathrm{fin}}[0,1]\) and \(A_{\mathbf{t}}\) (with \(A_{\mathbf{t}} = \emptyset\) if \(d=0\)).

\begin{lemma}\label{R_fin}
If \(A \in \mathcal{B}^{\mathrm{fin}}[0,1]\), then there exists \(\mathbf{t} \in \mathcal{I}_d\) for some \(d \geq 0\) such that \(A \Delta A_{\mathbf{t}}\)  contains only a finite number of points.
\end{lemma}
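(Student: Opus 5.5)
We first pin down what an element of \(\mathcal{B}^{\mathrm{fin}}[0,1]\) looks like, then build \(\mathbf{t}\) from it directly. The finite unions of intervals of any type (open, closed, half-open, degenerate) are closed under complement in \([0,1]\): the complement of such a union is again a finite union of intervals, since the endpoints split \([0,1]\) into finitely many pieces. They are also closed under finite unions and intersections. So this family is already an algebra, and it coincides with the algebra generated by finite unions of intervals. Hence every \(A \in \mathcal{B}^{\mathrm{fin}}[0,1]\) can be written as
\[
A=\bigcup_{i=1}^{d} J_i \cup F,
\]
where the \(J_i\) are nondegenerate intervals with endpoints \(a_i<b_i\) in \([0,1]\), and \(F\) is a finite set of isolated points. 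If no nondegenerate interval is present, we set \(d=0\).

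Next we define \(\mathbf{t}\in\mathcal{I}_d\) by \(\underline{t}_i:=a_i\) and \(\overline{t}_i:=b_i\). Then \(\underline{t}_i\le\overline{t}_i\) holds, and \(A_{\mathbf{t}}=\bigcup_{i=1}^d [a_i,b_i]\). For each \(i\) the set \([a_i,b_i]\,\Delta\, J_i\) is contained in \(\{a_i,b_i\}\), because \(J_i\) differs from its closure only by possibly missing endpoints.

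Using \(\bigl(\bigcup_i X_i\bigr)\Delta\bigl(\bigcup_i Y_i\bigr)\subseteq\bigcup_i (X_i\Delta Y_i)\), the same inclusion used at the end of the proof of Lemma~\ref{L_F_2}, we obtain
\[
A\,\Delta\, A_{\mathbf{t}}\subseteq F\cup\bigcup_{i=1}^d\{a_i,b_i\}.
\]
The right-hand side is finite. In the case \(d=0\) we have \(A_{\mathbf{t}}=\emptyset\), so \(A\Delta A_{\mathbf{t}}=F\), which is finite as well.

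The only step needing care is the first one: that the generated algebra adds nothing beyond finite unions of intervals and points. It suffices to check that the complement of \(\bigcup J_i\cup F\) within \([0,1]\) is again of this form. To see this, order all the endpoints and the points of \(F\), and note that between consecutive such points each open gap is either entirely inside or entirely outside the set. Everything after that is bookkeeping of endpoints.
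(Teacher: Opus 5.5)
Your proof is correct and follows the paper's route. The paper gives no written argument beyond saying the lemma follows from the definitions, having already asserted that every \(A\in\mathcal{B}^{\mathrm{fin}}[0,1]\) is a finite union of intervals and points, so that the interval endpoints serve as \(\mathbf{t}\); your check that finite unions of intervals are closed under complementation (so the generated algebra adds nothing new) fills in the one step the paper takes for granted.
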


\begin{corollary}[Multivariate CLT]\label{CLT1}
Let \(k_1, \dots, k_m \geq 1\) and \(A_1, \dots, A_m \in \mathcal{B}^{\mathrm{fin}}[0,1]\) for some \(m \geq 1\), and assume \eqref{reg} holds for some \(\theta \in (0,1)\). Then, as \(n \to \infty\),
\[
\bigl( Y_{n,A_1,k_1}^*, \dots, Y_{n,A_m,k_m}^* \bigr) \Rightarrow \bigl( Y_{A_1,k_1}^*, \dots, Y_{A_m,k_m}^* \bigr),
\]
with the same Gaussian limit as in Theorem~\ref{CLT}.
\end{corollary}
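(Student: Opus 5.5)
Via Lemma~\ref{R_fin}, I will reduce each \(A_j\) to a set of the form \(A_{\mathbf{t}}\) and then read off the result from the second assertion of Theorem~\ref{FCLT}. For each \(j\), Lemma~\ref{R_fin} gives \(d_j\ge 0\) and \(\mathbf{t}_j\in\mathcal{I}_{d_j}\) such that \(A_j\Delta A_{\mathbf{t}_j}\) is a finite set \(F_j\). To use one field, I take \(d=\max_j d_j\) and pad each \(\mathbf{t}_j\) with degenerate intervals \([\underline t,\overline t]\) with \(\underline t=\overline t\) equal to an endpoint already used. This adds at most finitely many points, so they can be absorbed into \(F_j\). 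If \(d_j=0\), then \(A_j\) is finite. Then \(R^*_{nA_j,k}\le \#(nA_j)\le |F_j|\) is bounded and \(Y^*_{n,A_j,k}\to 0\) since \(M(n)\to\infty\). The limit is consistent, because \(|A_j|=0\) makes the variance in \eqref{K*k1} vanish (\(\Pi(uA_j)=0\) a.s.).

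\textbf{Step 1 (finite perturbation is negligible).} Each point of \(F_j\) contributes to \(n F_j\) at most one integer index, so \(nA_j\cap\mathbb{Z}_+\) and \(nA_{\mathbf{t}_j}\cap\mathbb{Z}_+\) differ by at most \(|F_j|\) indices. Adding or removing one ball changes \(R^*_{\cdot,k}\) by at most \(1\), by the same reasoning as in Lemma~\ref{L_F_1} applied deterministically. Hence
\[
\bigl|R^*_{nA_j,k_j}-R^*_{nA_{\mathbf{t}_j},k_j}\bigr|\le |F_j| \quad\text{a.s.}
\]
The same bound holds for the expectations, so \(|Y^*_{n,A_j,k_j}-Y^*_{n,k_j}(\mathbf{t}_j)|\le 2|F_j|/\sqrt{M(n)}\to 0\), because \(M(n)\to\infty\) by \eqref{conv}.

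\textbf{Step 2 (limit via FCLT).} By Theorem~\ref{FCLT}, the vector \((Y^*_{n,k_1}(\mathbf{t}_1),\dots,Y^*_{n,k_m}(\mathbf{t}_m))\) converges in distribution, as a finite-dimensional projection of a field converging weakly to a continuous limit. The limit is \((Y^*_{k_1}(\mathbf{t}_1),\dots,Y^*_{k_m}(\mathbf{t}_m))\), a centered Gaussian vector with covariances \(K^*(A_{\mathbf{t}_i},k_i,A_{\mathbf{t}_j},k_j)\) given by \eqref{K*k1}. By Slutsky's lemma and Step 1, the vector \((Y^*_{n,A_j,k_j})_j\) has the same limit.

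\textbf{Step 3 (identification of the covariance).} The covariance \eqref{K*k1} depends on the sets only through the joint law of \((\Pi(uA_i),\Pi(uA_j))\), which is determined by \(|A_i\setminus A_j|\), \(|A_i\cap A_j|\) and \(|A_j\setminus A_i|\). These Lebesgue measures are unchanged under finite modifications. Therefore
\[
K^*(A_{\mathbf{t}_i},k_i,A_{\mathbf{t}_j},k_j)=K^*(A_i,k_i,A_j,k_j),
\]
which is exactly the Gaussian limit of Theorem~\ref{CLT}.

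The only mildly delicate point is Step 1: the deterministic one-ball perturbation bound and the handling of the degenerate \(d=0\) case. Everything else is direct from Theorem~\ref{FCLT}.
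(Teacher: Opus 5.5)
Your proposal is correct and follows the same route as the paper: reduce each \(A_j\) to some \(A_{\mathbf{t}_j}\) via Lemma~\ref{R_fin}, pass to a common \(d=\max_j d_j\), and read off the finite-dimensional limit from Theorem~\ref{FCLT}. The paper leaves several details implicit that you supply, and all of them are sound: the padding by degenerate intervals, the a.s.\ bound \(|F_j|\) for finite modifications (giving an \(O(1/\sqrt{M(n)})\) error), the degenerate case \(d_j=0\), and the invariance of \eqref{K*k1} under Lebesgue-null changes of the sets.
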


\begin{proof}
By Lemma~\ref{R_fin}, it suffices to consider the case \(A_j = A_{\mathbf{t}_j}\) for some \(\mathbf{t}_j \in \mathcal{I}_{d_j}\) (\(j=1,\dots,m\)). Let \(d := \max_{j \leq m} d_j\) and apply Theorem~\ref{FCLT}.
\end{proof}

Let $\mathbf{a} = \{a_i\}_{i\ge0}$ be a sequence of real numbers with \(a_0 = 0\). For \(\mathbf{t} \in \mathcal{I}_d\) (\(d \geq 1\)) and \(n \geq 1\), define the weighted sum statistic
\[
Q(n, \mathbf{a}, \mathbf{t}) := \frac{1}{\sqrt{M(n)}} \sum_{i=1}^n a_i \bigl( R_{n A_{\mathbf{t}}, i} - \mathbb{E} R_{n A_{\mathbf{t}}, i} \bigr).
\]

The following corollary generalizes \cite[Theorem 1]{Wang} to random fields over \(\mathcal{I}_d\) and broader probability distributions (beyond those in \cite{Wang}). It uses the integral representation of the limit from \cite{Wang}, where \(\pi_{ij}\) is defined in Corollary~\ref{KAA}.

\begin{corollary}\label{Q}
Assume \eqref{reg} holds for some \(\theta \in (0,1)\), \(d \geq 1\), and the sequence \(\mathbf{a}\) is such that
\begin{equation}\label{a_cov}
\lim_{m \to \infty} \sum_{i,j = m+1}^\infty \pi_{ij} a_i a_j = 0.
\end{equation}
Then, as \(n \to \infty\),
\[
\bigl( Q(n, \mathbf{a}, \mathbf{t}) : \mathbf{t} \in \mathcal{I}_d \bigr) \Rightarrow \bigl( Q(\mathbf{a}, \mathbf{t}) : \mathbf{t} \in \mathcal{I}_d \bigr),
\]
where
\begin{equation}\label{Q_aA}
Q(\mathbf{a}, \mathbf{t}) = \frac{1}{\sqrt{-\Gamma(-\theta)}} \int_{\mathbb{R}_+^d \times \Omega'} a^0_{\Pi(y A_{\mathbf{t}})} \, y^{-(\theta+1)/2} \, W(dy, d\omega)
\end{equation}
is a \(d\)-dimensional centered Gaussian field. Here, \(W\) is a standard Wiener measure on \(\mathbb{R}_+^d \times \Omega'\), \(\Pi\) is a standard Poisson process on \((\Omega', \mathcal{B}')\) independent of \(W\), and \(a^0_{\Pi(tA)} := a_{\Pi(tA)} - \mathbb{E} a_{\Pi(tA)}\). 
\end{corollary}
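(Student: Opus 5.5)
The plan is to reduce Corollary~\ref{Q} to Theorem~\ref{FCLT} by truncating the weight sequence, and then to remove the truncation with a uniform second-moment estimate. Fix $m\ge1$ and write $\mathbf a^{(m)}=(a_0,\dots,a_m,0,0,\dots)$. Since $R_{nA,i}=R^*_{nA,i}-R^*_{nA,i+1}$, the truncated statistic $Q(n,\mathbf a^{(m)},\mathbf t)$ is the finite linear combination
\[
\sum_{i=1}^{m}(a_i-a_{i-1})\,Y^*_{n,i}(\mathbf t)-a_m Y^*_{n,m+1}(\mathbf t)
\]
of the fields $Y^*_{n,1},\dots,Y^*_{n,m+1}$. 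Theorem~\ref{FCLT} gives joint weak convergence of these fields, with $d$ replaced by the number of intervals, to a continuous Gaussian field. The continuous mapping theorem then yields $Q(n,\mathbf a^{(m)},\cdot)\Rightarrow Q(\mathbf a^{(m)},\cdot)$, and the limit is a centered continuous Gaussian field.

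Next I would identify the limit with the integral representation \eqref{Q_aA}. Two centered Gaussian fields agree if their covariances do. The covariance of the limit is a combination of the kernels $K^*$ from \eqref{K*k1}, so it equals
\[
\frac{1}{\Gamma(1-\theta)}\int_0^\infty \mathrm{cov}\bigl(a_{\Pi(uA_{\mathbf s})},a_{\Pi(uA_{\mathbf t})}\bigr)\,d(-u^{-\theta}).
\]
This uses the identities $\mathbf 1(\Pi\ge k)-\mathbf 1(\Pi\ge k+1)=\mathbf 1(\Pi=k)$ and $\mathrm{cov}(\mathbf 1(\cdot\ge k),\cdot)=\mathrm{cov}(\mathbf 1(\cdot<k),\cdot)$. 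Now $d(-u^{-\theta})=\theta u^{-\theta-1}du$ and $\theta/\Gamma(1-\theta)=1/(-\Gamma(-\theta))$. By the It\^o isometry for $W$ on $\mathbb R_+\times\Omega'$, this is exactly the covariance of \eqref{Q_aA}, with $y^{-(\theta+1)/2}$ squared giving the density. I read the integration variable $y$ as a scalar in $\mathbb R_+$.

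Finally, to pass $m\to\infty$ I would use the standard approximation lemma (Billingsley, Theorem 3.2). This requires three things:
\begin{itemize}
\item $Q(\mathbf a^{(m)},\cdot)\to Q(\mathbf a,\cdot)$ in $C(\mathcal I_d)$ in probability;
\item $\lim_m\limsup_n \mathbf P\bigl(\sup_{\mathbf t}|Q(n,\mathbf a,\mathbf t)-Q(n,\mathbf a^{(m)},\mathbf t)|>\eta\bigr)=0$;
\item the convergence of the truncated statistics, which was established above.
\end{itemize}

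The tail $Q(n,\mathbf a,\mathbf t)-Q(n,\mathbf a^{(m)},\mathbf t)$ equals $M(n)^{-1/2}\sum_{\ell}\bigl(b_\ell-\mathbb E b_\ell\bigr)$, where $b_\ell=\sum_{i>m}a_i\mathbf 1(\text{urn }\ell\text{ has }i\text{ balls from }nA_{\mathbf t})$. In the Poissonized version the summands are independent over urns. Its variance is $M(n)^{-1}\sum_{i,j>m}a_ia_j\,c_{ij}(nA_{\mathbf t},nA_{\mathbf t})$, which by Corollary~\ref{KAA} is asymptotically $|A_{\mathbf t}|^\theta\sum_{i,j>m}\pi_{ij}a_ia_j$. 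Condition \eqref{a_cov} makes this small, uniformly in $\mathbf t$ since $|A_{\mathbf t}|\le1$. The same computation with $\mathbf a$ in place of $\mathbf a^{(m)}$ handles the limit fields, provided continuity of $Q(\mathbf a,\cdot)$ can be secured.

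\textbf{Main obstacle.} The hard part is upgrading pointwise variance control to control of the supremum over $\mathbf t$, and I have not verified a way to do it. A single second-moment bound does not control a supremum. Moreover, the increment bound of Lemma~\ref{L_F_1} fails for an unbounded sequence $\mathbf a$: one ball moving changes $b_\ell$ by about $|a_{i+1}-a_i|$.

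My first attempt would be to bound the tail increments by the total variation $\sum_{i>m}|a_{i+1}-a_i|$ times the indicator in \eqref{E_1}. I would then rerun the Rosenthal chaining from the proof of Theorem~\ref{FCLT} with $\gamma$-th moments. This works if the weights are bounded with summable variation. In general, the tail condition \eqref{a_cov} has to enter through exact moments. One route is Rosenthal on a finite grid plus a modulus-of-continuity bound, using $\pi_{ij}$-type asymptotics for higher moments. If that fails, I would fall back to finite-dimensional convergence, obtained through the Cram\'er--Wold device, together with tightness inherited from the truncated fields.

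Depoissonization would be handled as in the proof of Theorem~\ref{FCLT}, via Lemma~\ref{L_F_4}.
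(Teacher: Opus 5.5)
Your route is the paper's route. The paper writes $Q$ as a combination of the fields $Y_{n,A,i}$ and takes the truncated part from Theorem~\ref{FCLT}. It reduces the tail to the variance condition \eqref{a_cond} via Corollary~\ref{KAA} and \eqref{a_cov}, and matches covariances by the It\^o isometry, with $y$ a scalar as in its own check. The paper stops exactly where you do: it declares the pointwise condition \eqref{a_cond} sufficient and says nothing about $\sup_{\mathbf t}$ or about depoissonizing the tail. So the obstacle you flag is not an idea you missed. It is a genuine gap in both arguments, which as written give at most finite-dimensional convergence. Your fallback does not help, because tightness passes from the truncated fields to $Q(n,\mathbf a,\cdot)$ only through that same uniform tail bound.

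Under \eqref{a_cov} alone the gap cannot be closed, because the limit need not be bounded. Take $a_k=(-1)^k k^{\theta/2}(\log(k+1))^{-\beta/2}$ with $1<\beta<2$, which satisfies \eqref{a_suff}. Consider $A_{\mathbf t}=[0,t]$ with $t\in[1/2,1]$, set $h=|t-s|$, and let $y\gg 1/h$. The parity of the $\mathrm{Poi}(yh)$ extra count then decorrelates $a_{\Pi(ys)}$ and $a_{\Pi(yt)}$, so
\[
\mathrm{Var}\bigl(Q(\mathbf a,\mathbf t)-Q(\mathbf a,\mathbf s)\bigr)\gtrsim\int_{C/h}^\infty\frac{dy}{y(\log y)^{\beta}}\asymp(\log(1/h))^{1-\beta}.
\]
Sudakov's minoration on a grid of mesh $h$ then gives $\mathbb E\sup_{t\in[1/2,1]}Q(\mathbf a,\mathbf t)\gtrsim(\log(1/h))^{1-\beta/2}\to\infty$. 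For bounded $\mathbf a$, by contrast, you do not need summable variation. The bound $|a_{\Pi_\ell(B)}-a_{\Pi_\ell(A)}|\le 2\sup_k|a_k|\,\mathbf 1(\Pi_\ell(A\Delta B)\ge1)$ replaces \eqref{E_1}. The proof of Theorem~\ref{FCLT}, including Lemma~\ref{L_F_4}, then runs verbatim for $Q(n,\mathbf a,\cdot)$ itself, and truncation is needed only for the finite-dimensional part.

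There is a second gap, which you inherit from the paper without flagging it. You claim the tail variance is asymptotically $|A_{\mathbf t}|^\theta\sum_{i,j>m}\pi_{ij}a_ia_j$ ``by Corollary~\ref{KAA}''. That corollary is a statement for fixed $(i,j)$, whereas here the limit in $n$ must be interchanged with an infinite sum. The Poissonized tail variance is $M(n)^{-1}\sum_\ell g(np_\ell|A|)$ with $g(u)=\mathrm{Var}\bigl(a_{\Pi(u)}\mathbf 1(\Pi(u)>m)\bigr)$. Its convergence to $\frac{|A|^\theta}{\Gamma(1-\theta)}\int_0^\infty g\,d(-u^{-\theta})$ is an Abelian theorem. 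It needs a domination such as $g(u)=O(u^{\theta-\delta})$, combined with Potter bounds on $L$.

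Without that domination the interchange fails. Take the weights above and $L(x)=(\log x)^{-\gamma}$ with $\gamma>\beta$. Then urn $1$ alone contributes $\mathrm{Var}(a_{\Pi_1(n)})/M(n)\asymp(\log n)^{\gamma-\beta}\to\infty$ to \eqref{a_cond}, for every $m$. So even the finite-dimensional part needs a growth restriction beyond \eqref{a_cov}. A pointwise bound $a_k^2=O(k^{\theta-\delta})$ for some $\delta\in(0,\theta)$ gives $g(u)=O(u^{\theta-\delta})$ by Jensen and makes the interchange work.
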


\begin{proof}
Note that
\[
Q(n, \mathbf{a}, A) = \sum_{i=1}^n a_i Y_{n,A,i}.
\]
By Theorem~\ref{FCLT}, finite-dimensional convergence holds, and it suffices to verify the tail condition
\begin{equation}\label{a_cond}
\lim_{m \to \infty} \lim_{n \to \infty} \mathrm{Var}\left( \frac{1}{\sqrt{M(n)}} \sum_{i=m+1}^n a_i \bigl( R_{nA,i}^{\Pi,*} - \mathbb{E} R_{nA,i}^{\Pi,*} \bigr) \right) = 0.
\end{equation}
By Corollary~\ref{KAA}, \eqref{a_cond} is equivalent (for \(|A| > 0\)) to \eqref{a_cov}. Thus,
\[
Q(n, \mathbf{a}, A) \Rightarrow \sum_{i=1}^\infty a_i Y_{A,i},
\]
a centered Gaussian field with covariances
\[
\mathrm{cov}\left( \sum_{i=1}^\infty a_i Y_{A,i}, \sum_{j=1}^\infty a_j Y_{B,j} \right) = \sum_{i,j=1}^\infty a_i a_j K(i,A,j,B).
\]

This matches \eqref{Q_aA}, since \(\mathbf{cov}(\int f \, dW, \int g \, dW) = \int f g\),
\begin{align*}
    &\mathbf{cov}\bigl( Q(\mathbf{a},A), Q(\mathbf{a},B) \bigr) \\
    &= \sum_{i,j=1}^\infty a_i a_j \frac{\theta}{\Gamma(1-\theta)} \int_0^\infty \bigl[ \mathbf{P}(\Pi(tA)=i, \Pi(tB)=j) - \mathbf{P}(\Pi(tA)=i) \mathbf{P}(\Pi(tB)=j) \bigr] t^{-\theta-1} \, dt,
\end{align*}
which equals \(\sum_{i,j} a_i a_j K(i,A,j,B)\) by definition of \(K\). 
\end{proof}

\begin{remark}
Note that \(q_k = -\binom{k-\theta-1}{k}\), since
 \(\binom{n}{k} = (-1)^k \binom{k-n-1}{k}\). From the asymptotics \(\binom{z+k}{k} \sim k^z / \Gamma(z+1)\) as \(k \to \infty\), we obtain
\begin{equation}\label{q_as}
q_k \sim -\frac{k^{-\theta-1}}{\Gamma(-\theta)} = \frac{\theta k^{-\theta-1}}{\Gamma(1-\theta)} \quad \text{as} \quad k \to \infty.
\end{equation}
Thus, \eqref{a_cov} holds due to \eqref{q_as} under the sufficient condition
\begin{equation}\label{a_suff}
\sum_{k=1}^\infty k^{-\theta-1} a_k^2 < \infty.
\end{equation}
\end{remark}

\section{Strong Law of Large Numbers}\label{sec5}

In this section, we prove SLLN. Theorem~\ref{T_SLLN} extends Karlin's SLLN~\cite{Karlin} to arbitrary sets \(A \in \mathcal{B}^{\mathrm{fin}}[0,1]\). We begin with a simple observation on Poissonized statistics.

\begin{lemma}\label{L_SLLN}
Let \(k \geq 1\), \(A \in \mathcal{B}[0,1]\) with \(|A| > 0\), and assume \eqref{reg} holds with \(\theta > 0\). For any sequence \(\{t_n\}_{n \geq 1}\) of positive numbers with \(\liminf_{n \to \infty} (\ln t_n)/(\ln n) > 0\), we have almost surely as \(n \to \infty\),
\begin{align*}
    \frac{R_{t_n A,k}^{\Pi,*}}{\mathbb{E} R_{t_n A,k}^{\Pi,*}} \to 1, \qquad \frac{R_{t_n A,k}^{\Pi}}{\mathbb{E} R_{t_n A,k}^{\Pi}} \to 1.
\end{align*}
\end{lemma}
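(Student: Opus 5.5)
By Lemma~\ref{L3}(B1), for each fixed $n$ the variable $R_{t_nA,k}^{\Pi,*}$ has the same distribution as $R_{t_n|A|,k}^{\Pi,*}$. Almost sure convergence of a sequence, however, is not determined by its one-dimensional marginals. So I would not reduce to a single monotone process. Instead I would prove a Borel--Cantelli statement directly from high moment bounds.

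The representation $R_{t_nA,k}^{\Pi,*}=\sum_i \mathbf 1(\Pi_i(t_nA)\ge k)$ writes the statistic as a sum of independent indicators. Rosenthal's inequality, exactly as in the proof of Theorem~\ref{FCLT}, then gives for every even $\gamma\ge 2$
\[
\mathbb{E}\bigl|R_{t_nA,k}^{\Pi,*}-\mathbb{E}R_{t_nA,k}^{\Pi,*}\bigr|^{\gamma}\le c(\gamma)\Bigl(\mathbb{E}R_{t_nA,k}^{\Pi,*}+\bigl(\mathbb{E}R_{t_nA,k}^{\Pi,*}\bigr)^{\gamma/2}\Bigr).
\]
Here I use that each centered indicator is bounded by $1$ and its variance is at most its mean. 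By Markov's inequality, for every $\varepsilon>0$,
\[
\mathbf P\Bigl(\bigl|R_{t_nA,k}^{\Pi,*}/\mathbb{E}R_{t_nA,k}^{\Pi,*}-1\bigr|>\varepsilon\Bigr)\le c(\gamma)\,\varepsilon^{-\gamma}\bigl(\mathbb{E}R_{t_nA,k}^{\Pi,*}\bigr)^{-\gamma/2}
\]
once the mean exceeds $1$. The remaining task is to bound the mean from below polynomially in $n$.

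By Lemma~\ref{L3}(B2) and \eqref{conv} (Karlin's asymptotics, stated there for $\theta\in(0,1)$; for $\theta=1$ I would use the analogous regular variation of the mean), $\mathbb{E}R_{t_nA,k}^{\Pi,*}=\sum_{j\ge k}M_j(t_n|A|)$ is regularly varying of index $\theta>0$ in $t_n$. For the second claim I use $\mathbb{E}R^{\Pi}_{t_nA,k}=M_k(t_n|A|)\sim \tfrac{\theta\Gamma(k-\theta)}{k!}(t_n|A|)^\theta L(t_n|A|)$. This positive constant is where $\theta>0$ is needed. The hypothesis $\liminf (\ln t_n)/\ln n>0$ gives some $\beta>0$ with $t_n\ge n^{\beta}$ for large $n$. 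The Potter bounds for the slowly varying $L$ then give $\mathbb{E}R\ge n^{\beta\theta/2}$ eventually. I would choose $\gamma$ even with $\gamma\beta\theta/4>1$. The probabilities above are then summable, and Borel--Cantelli yields the almost sure convergence along $\{t_n\}$ for every rational $\varepsilon$.

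For $R^{\Pi}_{t_nA,k}=\sum_i\mathbf 1(\Pi_i(t_nA)=k)$ the same argument applies verbatim, again a sum of independent indicators, with mean $M_k(t_n|A|)$. The main obstacle is ensuring the means grow polynomially, which requires $t_n\to\infty$ polynomially; this is exactly the role of the $\liminf$ hypothesis. The difficulty is controlling $L$ uniformly along an arbitrary sequence, which the Potter bounds handle.
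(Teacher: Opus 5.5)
Your proposal is correct and follows essentially the same route as the paper. You apply Rosenthal's inequality to the sum of independent centered indicators, with central moments bounded by the means, to get a bound of order $(\mathbb{E}R)^{-\gamma/2}$, and then combine polynomial growth of the mean with Borel--Cantelli. Your use of Potter bounds to get an explicit polynomial lower bound on $\mathbb{E}R_{t_nA,k}^{\Pi,*}$, and your choice of $\gamma$ with $\gamma\beta\theta/4>1$, just spell out what the paper leaves implicit as ``polynomial speed''.
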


\begin{proof}
Both \(R_{t_n A,k}^{\Pi,*}\) and \(R_{t_n A,k}^{\Pi}\) are sums of independent Bernoulli random variables. By the Rosenthal inequality and the fact that central absolute moments of Bernoullis are at most the mean, for any \(\gamma > 2\), we have
\begin{align*}
\mathbb{E} \left| \frac{R_{t_n A,k}^{\Pi,*}}{\mathbb{E} R_{t_n A,k}^{\Pi,*}} - 1 \right|^\gamma
&= \bigl( \mathbb{E} R_{t_n A,k}^{\Pi,*} \bigr)^{-\gamma} \mathbb{E} \Biggl| \sum_{i=1}^\infty \bigl( \mathbf{1}(\Pi_i(t_n A) \geq k) - \mathbb{P}(\Pi_i(t_n A) \geq k) \bigr) \Biggr|^\gamma \\
&\lesssim \bigl( \mathbb{E} R_{t_n A,k}^{\Pi,*} \bigr)^{-\gamma} \Biggl[ \sum_{i=1}^\infty \mathbb{P}(\Pi_i(t_n A) \geq k) + \Biggl( \sum_{i=1}^\infty \mathbb{P}(\Pi_i(t_n A) \geq k) \Biggr)^{\gamma/2} \Biggr] \\
&\sim \bigl( \mathbb{E} R_{t_n A,k}^{\Pi,*} \bigr)^{-\gamma/2}.
\end{align*}
By Lemma~\ref{L3}(B3) and the growth condition on \(t_n\) (ensuring \(\mathbb{E} R_{t_n A,k}^{\Pi,*} \to \infty\) with polynomial speed), the right-hand side tends to \(0\) for large \(\gamma\). Borel--Cantelli lemma then yields the first convergence a.s., the second follows analogously. 
\end{proof}

\begin{lemma}\label{L_SLLN_2}
Let \(k \geq 1\), \(A \in \mathcal{B}^{\mathrm{fin}}[0,1]\) with \(|A| > 0\), and assume \eqref{reg} holds with \(\theta > 0\). Then almost surely as \(t \to \infty\),
\[
\frac{R_{t A,k}^{\Pi,*}}{\mathbb{E} R_{t A,k}^{\Pi,*}} \to 1, \qquad \frac{R_{t A,k}^{\Pi}}{\mathbb{E} R_{t A,k}^{\Pi}} \to 1.
\]
\end{lemma}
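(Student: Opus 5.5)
The plan is to pass from the discrete sequence of Lemma~\ref{L_SLLN} to continuous \(t\) by a sandwich argument along a geometric-type grid. There is no monotonicity of \(R^{\Pi,*}_{tA,k}\) in \(t\) when \(A\) is a general finite union of intervals. The set \(tA\) moves, since \(A\) need not contain \(0\), so \(tA\) and \(t'A\) are not nested. Consequently we cannot sandwich directly. Instead we control oscillations with Corollary~\ref{C_F_1}, which gives the a.s. bound
\[
|R^{\Pi,*}_{tA,k}-R^{\Pi,*}_{t'A,k}|\le R^{\Pi,*}_{(tA)\Delta(t'A),1}.
\]
By Lemma~\ref{R_fin}, changing \(A\) by finitely many points affects neither Lebesgue measure nor (a.s.) the Poisson process, so we may assume \(A=A_{\mathbf t}\) is a union of \(d\) closed intervals.

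First, fix a grid \(t_n:=n^{\beta}\) for some \(\beta>0\), so that \(\ln t_n/\ln n=\beta>0\) and Lemma~\ref{L_SLLN} applies along \(t_n\). For \(t\in[t_n,t_{n+1}]\), write \(tA\) as the union of intervals \([t\underline t_i,t\overline t_i]\). The symmetric difference \((tA)\Delta(t_nA)\) is contained in a union of \(2d\) intervals \([t_n a, t_{n+1}a]\), with \(a\) ranging over the endpoints of \(A\). This is the analogue of Lemma~\ref{L_F_2}. Each such interval has length at most \(t_{n+1}-t_n\le C n^{\beta-1}\). Hence
\[
\sup_{t\in[t_n,t_{n+1}]} |R^{\Pi,*}_{tA,k}-R^{\Pi,*}_{t_nA,k}|\le \sum_{j=1}^{2d}\Bigl(\Pi(t_{n+1}a_j)-\Pi(t_na_j)\Bigr),
\]
where we have bounded the number of occupied urns by the number of Poisson points. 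This bound is uniform in \(t\) inside the block, which is exactly what the non-monotone situation requires.

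Second, we show the right-hand side is \(o(\mathbb E R^{\Pi,*}_{t_nA,k})\) a.s. We choose \(\beta<1\), so that the increments have mean at most \(C n^{\beta-1}\to0\). Meanwhile \(\mathbb E R^{\Pi,*}_{t_nA,k}=\mathbb E R^{\Pi,*}_{t_n|A|,k}\) grows like \(n^{\beta\theta}L(n^\beta)\) by Lemma~\ref{L3}(B1) and \eqref{conv}. A Poisson variable with mean at most \(1\) exceeds \(\varepsilon n^{\beta\theta/2}\) with probability super-polynomially small, so Borel--Cantelli shows the increments are eventually below \(\varepsilon\,\mathbb E R^{\Pi,*}_{t_nA,k}\). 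One could alternatively take \(\beta\ge1\) and use Poisson concentration, since mean \(n^{\beta-1}\) is much smaller than \(n^{\beta\theta}\) only when \(\beta-1<\beta\theta\); taking \(\beta<1\) avoids this condition altogether.

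Third, we need \(\mathbb E R^{\Pi,*}_{tA,k}/\mathbb E R^{\Pi,*}_{t_nA,k}\to1\) uniformly for \(t\in[t_n,t_{n+1}]\). This follows from regular variation of \(M_k\) and \(M\) (by \eqref{conv}, the functions \(\mathbb E R^{\Pi,*}_{t,k}=\sum_{j\ge k}M_j(t)\) are regularly varying with index \(\theta\)) together with \(t_{n+1}/t_n\to1\) and the uniform convergence theorem. Combining the three steps gives the first claim. The second claim, for \(R^{\Pi}_{tA,k}=R^{\Pi,*}_{tA,k}-R^{\Pi,*}_{tA,k+1}\), follows from the first applied with \(k\) and \(k+1\) and \(\mathbb E R^{\Pi}_{tA,k}=M_k(t|A|)\asymp \mathbb E R^{\Pi,*}_{tA,k}\) by \eqref{Mkto}, since \(q_k>0\).

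The main obstacle I expect is the uniform-in-block oscillation control, i.e. making the geometric description of \((tA)\Delta(t_nA)\) rigorous and checking that the Borel--Cantelli summability holds with the slowly varying factor present. I would handle the latter by bounding \(L(n^\beta)\ge n^{-\beta\theta/2}\) eventually.
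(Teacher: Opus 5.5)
Your proposal is correct and follows essentially the paper's argument. You reduce to $A=A_{\mathbf t}$ and bound the oscillation over a whole block uniformly via Corollary~\ref{C_F_1} by the occupancy, hence the Poisson count, of the fixed endpoint set $\bigcup_j[t_na_j,t_{n+1}a_j]$ (the paper's $A_{\mathbf t,n}$). You then apply Lemma~\ref{L_SLLN} along the grid and pass to continuous $t$ through regular variation of the means.

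The differences are cosmetic. The paper uses the integer grid $t\in[n,n+1)$, where the bounding Poisson counts have mean at most $2d$, so your restriction $\beta<1$ is unnecessary though harmless. One small point of precision, which the paper also glosses over: the finite-point reduction is not a.s.\ invisible simultaneously for all $t$, since for $x\in A\Delta A_{\mathbf t}$ with $x>0$, $tx$ is a Poisson point at the times $t=T_m/x$. However, Corollary~\ref{C_F_1} bounds that discrepancy by the number of points in $A\Delta A_{\mathbf t}$, which is negligible.
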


\begin{proof}
By Lemma~\ref{R_fin}, there exists \(\mathbf{t} \in \mathcal{I}_d\) (\(d \geq 1\)) such that \(A \Delta A_{\mathbf{t}}\) contains only a finite number of points. Thus, it suffices to prove the result for \(A = A_{\mathbf{t}}\). Let \(t = t_n \in [n, n+1)\) for \(n \geq 1\). By Corollary~\ref{C_F_2} and Lemma~\ref{L_F_2},
\[
\bigl| R_{t_n A_{\mathbf{t}},k}^{\Pi,*} - R_{n A_{\mathbf{t}},k}^{\Pi,*} \bigr| \leq_{a.s.} R_{t_n A_{\mathbf{t}} \Delta n A_{\mathbf{t}}, 1}^{\Pi,*} \leq_{a.s.} R_{A_{\mathbf{t},n},1}^{\Pi,*},
\]
where \(A_{\mathbf{t},n} := \bigcup_{i=1}^d \bigl( [n\underline{t}_i, (n+1)\underline{t}_i] \cup [n\overline{t}_i, (n+1)\overline{t}_i] \bigr)\) satisfies \(|A_{\mathbf{t},n}| \leq 2d\). Moreover, \(\mathbb{E} R_{n A_{\mathbf{t}},k}^{\Pi,*} \sim \mathbb{E} R_{t_n A_{\mathbf{t}},k}^{\Pi,*}\) as \(n \to \infty\) by Lemma~\ref{L3}(B2) and \eqref{conv}, so \(R_{A_{\mathbf{t},n},1}^{\Pi,*} / \mathbb{E} R_{n A_{\mathbf{t}},k}^{\Pi,*} \to_{a.s.} 0\) by an argument analogous to Lemma~\ref{L_SLLN}.

The identity
\[
\frac{R_{t A,k}^{\Pi,*}}{\mathbb{E} R_{t A,k}^{\Pi,*}} = \frac{\mathbb{E} R_{[t] A,k}^{\Pi,*}}{\mathbb{E} R_{t A,k}^{\Pi,*}} \left( \frac{R_{[t] A,k}^{\Pi,*}}{\mathbb{E} R_{[t] A,k}^{\Pi,*}} + \frac{R_{t A,k}^{\Pi,*} - R_{[t] A,k}^{\Pi,*}}{\mathbb{E} R_{[t] A,k}^{\Pi,*}} \right)
\]
then yields the first claim via Lemma~\ref{L_SLLN}. The second follows similarly.
\end{proof}

\begin{theorem}\label{T_SLLN}
Let \(k \geq 1\), \(A \in \mathcal{B}^{\mathrm{fin}}[0,1]\) with \(|A| > 0\), and assume \eqref{reg} holds with \(\theta > 0\). Then almost surely as \(n \to \infty\),
\[
\frac{R_{nA,k}^*}{\mathbb{E} R_{nA,k}^*} \to 1, \qquad \frac{R_{nA,k}}{\mathbb{E} R_{nA,k}} \to 1.
\]
\end{theorem}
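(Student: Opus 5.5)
We derive the theorem from Lemma~\ref{L_SLLN_2} by de-Poissonization along a random time change. By Lemma~\ref{R_fin}, $A$ differs from some $A_{\mathbf{t}}$, $\mathbf{t}\in\mathcal{I}_d$, by finitely many points. Changing finitely many ball indices alters $R^*_{nA,k}$ by a bounded amount, since each removed or added index changes at most one urn's indicator. The normalization $\mathbb{E}R^*_{nA,k}\to\infty$ by Lemma~\ref{L3}(A3), and it grows like $n^\theta L(n)$. Hence it suffices to treat $A=A_{\mathbf{t}}$.

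\textbf{Step 1: coupling.} Take the Poisson process independent of $\{X_i\}$. On the realization, the $j$-th ball is placed at time $T_j$. For a deterministic interval set $A_{\mathbf{t}}$, the Poissonized statistic $R^{\Pi,*}_{\tau A_{\mathbf{t}},k}$ at a random time $\tau$ counts urns hit by balls $j$ with $T_j\in\tau A_{\mathbf{t}}$. We compare this set of indices with $\{j: j\in nA_{\mathbf{t}}\}$. By the strong law for the Poisson process, $T_j/j\to1$ a.s. Moreover, for each endpoint $u\in\{\underline t_i,\overline t_i\}$ we have $|\Pi(nu)-nu|=O(\sqrt{n\log\log n})$ a.s. (LIL). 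Consequently
\[
\#\bigl(\{j: T_j\in nA_{\mathbf{t}}\}\,\Delta\,\{j\in nA_{\mathbf{t}}\}\bigr)\le \sum_{i}\bigl(|\Pi(n\underline t_i)-[n\underline t_i]|+|\Pi(n\overline t_i)-[n\overline t_i]|\bigr)=O(\sqrt{n\log\log n}).
\]
By the subadditivity argument behind Corollary~\ref{C_F_1}, which applies pathwise to index sets, we obtain
\[
|R^*_{nA_{\mathbf{t}},k}-R^{\Pi,*}_{nA_{\mathbf{t}},k}|\le R^*_{D_n,1},
\]
where $D_n$ is a random set of $O(\sqrt{n\log\log n})$ indices lying in $2d$ index windows around $n\underline t_i,n\overline t_i$.

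\textbf{Step 2: control the error term (main obstacle).} The trivial bound $R^*_{D_n,1}\le\#D_n=O(\sqrt{n\log\log n})$ suffices only when $\theta>1/2$. For general $\theta\in(0,1)$ we must show that $R^*_{D_n,1}/(n^\theta L(n))\to0$ a.s. The approach I would try first replaces the random windows by deterministic ones. On the a.s. event of the LIL, $D_n$ is contained in the union $W_n$ of the $2d$ index intervals $[nu-c\sqrt{n\log\log n},\,nu+c\sqrt{n\log\log n}]$, whose total length is $m_n=O(\sqrt{n\log\log n})$. By monotonicity of $R^*_{\cdot,1}$ in the index set (Lemma~\ref{L1}(ii)), it suffices to bound $R^*_{W_n,1}$. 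By Lemma~\ref{L3}(A1) we have $R^*_{W_n,1}\stackrel{d}{=}R_{m_n}$, whose mean is about $M(m_n)\asymp n^{\theta/2}L'(n)=o(M(n))$.

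To get almost sure control, we apply the Rosenthal/Bahadur moment bound: $\mathrm{Var}\le\mathbb{E}$, and higher central moments follow from Lemma~\ref{L3}(A2) and the configuration-function concentration of Remark~\ref{R3}, which gives exponential tails $\mathbf{P}(R_{m}\ge \mathbb{E}R_m+x)\le e^{-cx^2/(\mathbb{E}R_m+x)}$. Taking $x=\varepsilon M(n)$, the tail probabilities are summable in $n$, so Borel--Cantelli yields $R^*_{W_n,1}/M(n)\to0$ a.s. If the polynomial growth of $M(n)$ makes summability over all $n$ delicate, the fallback is to pass to a geometric subsequence and interpolate using monotonicity of $W_n$-type window unions between consecutive terms.

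\textbf{Step 3: conclude.} Lemma~\ref{L_SLLN_2} gives $R^{\Pi,*}_{nA,k}/\mathbb{E}R^{\Pi,*}_{nA,k}\to1$ a.s. Writing $\#(nA)\sim n|A|$ (Remark~\ref{R4}), we use the standard de-Poissonization of Karlin, $\mathbb{E}R^*_{m,k}\sim\mathbb{E}R^{\Pi,*}_{m,k}$ under \eqref{reg}, together with \eqref{conv}, to get $\mathbb{E}R^*_{nA,k}\sim M_k^*(n|A|)\sim\mathbb{E}R^{\Pi,*}_{nA,k}$. Combining this with Steps 1--2 gives the first claim. The second claim follows by applying the first for $k$ and $k+1$ and subtracting, using $R_{nA,k}=R^*_{nA,k}-R^*_{nA,k+1}$. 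The ratio of expectations converges to $(\sum_{j\ge k}q_j)/q_k$-type constants by \eqref{Mkto}, and these guarantee $\mathbb{E}R_{nA,k}\asymp M(n)$. The deviations of both starred terms are $o(M(n))$ a.s., which proves the claim.
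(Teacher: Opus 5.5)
Your proof is correct and is essentially the paper's argument. You reduce to $A_{\mathbf t}$, use the LIL for $\Pi$ to confine the discrepancy between the ball sets $\{j\in nA_{\mathbf t}\}$ and $\{j:T_j\in nA_{\mathbf t}\}$ to $2d$ windows of width $O(\sqrt{n\ln\ln n})$ around the endpoints, bound the error by a $k=1$ count over these windows as in Corollary~\ref{C_F_1}, and combine this with Lemma~\ref{L_SLLN_2} and the equivalence of the means.

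The only difference is how you control the window term. You use the non-Poissonized $R^*_{W_n,1}$ together with the exponential inequality for self-bounding functions. Note that this is a separate result from the variance bound quoted in Remark~\ref{R3}; that bound alone would give summability only for $\theta>2/3$. The paper instead keeps the window term Poissonized, $R^{\Pi,*}_{nA_{\mathbf t,n},1}$, so that independence over urns makes the Rosenthal--Borel--Cantelli argument of Lemma~\ref{L_SLLN} available.
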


\begin{proof}
By Lemma~\ref{R_fin}, there exists \(\mathbf{t} \in \mathcal{I}_d\) (\(d \geq 1\)) such that \(A \Delta A_{\mathbf{t}}\) contains only a finite number of points. Therefore, it suffices to prove the result for \(A = A_{\mathbf{t}}\).

By the law of the iterated logarithm for \(\Pi\) and monotonicity, for any \(\varepsilon \in (0,1)\), there exists \(n_0\) such that
\[
\mathbf{P}\Bigl( \forall n \geq n_0 \, \exists \delta_n: |\delta_n| \leq 1,\, \Pi\bigl(n + 2\delta_n \sqrt{n \ln \ln n}\bigr) = n \Bigr) \geq 1 - \varepsilon.
\]
Analogously to Lemma~\ref{L_F_4}, there exists \(n_0 \geq 1\) such that
\[
\mathbf{P}\Bigl( \forall n \geq n_0 \, \exists \mathbf{s}: \|\mathbf{s} - \mathbf{t}\| \leq 2 \sqrt{(\ln \ln n)/n},\, \Pi(n\mathbf{s}) = [n\mathbf{t}] \Bigr) =: \mathbf{P}(B(n_0)) \geq 1 - \varepsilon.
\]
Define
\[
A_{\mathbf{t},n} := \bigcup_{i=1}^d \Bigl( [\underline{t}_i - 2\sqrt{(\ln \ln n)/n}, \underline{t}_i + 2\sqrt{(\ln \ln n)/n}] \cup [\overline{t}_i - 2\sqrt{(\ln \ln n)/n}, \overline{t}_i + 2\sqrt{(\ln \ln n)/n}] \Bigr).
\]
On the event \(\{\Pi(n\mathbf{s}) = [n\mathbf{t}]\}\), we have \(R_{n A_{\mathbf{t}},k}^* = R_{n A_{\mathbf{s}},k}^{\Pi,*}\). Thus, on \(B(n_0)\), Corollary~\ref{C_F_1} and Lemma~\ref{L_F_2} yield for \(n \geq n_0\),
\[
|R_{n A_{\mathbf{t}},k}^* - R_{n A_{\mathbf{t}},k}^{\Pi,*}| \le \sup_{{\bf s} : \|{\bf s}-{\bf t}\|\le 2 \sqrt{\ln\ln n / n}}|R^{\Pi,*}_{n A_{\bf s},k}-R^{\Pi,*}_{n A_{\bf t},k}|\le R^{\Pi,*}_{n A_{{\bf t},n},1}.
\]
By Lemma~\ref{L_SLLN_2}, for any \(\varepsilon, \eta > 0\), there exists \(n_0\) such that for \(n \geq n_0\),
\[
\mathbb{P}\Bigl( \sup_{n \geq n_0} \frac{|R_{n A_{\mathbf{t}},k}^* - R_{n A_{\mathbf{t}},k}^{\Pi,*}|}{\mathbb{E} R_{n A_{\mathbf{t}},k}^*} \geq \eta \Bigr) \leq \mathbb{P}\Bigl( \sup_{n \geq n_0} \frac{R_{n A_{\mathbf{t},n},1}^{\Pi,*}}{\mathbb{E} R_{n A_{\mathbf{t}},k}^*} \geq \eta,\, B(n_0) \Bigr) + \varepsilon \leq 2\varepsilon.
\]
This yields the first claim; the second follows analogously.
\end{proof}

\section{Applications}\label{sec6}

An important application of Karlin's infinite urn scheme arises in elementary probabilistic models of literary texts. These assume an infinite dictionary (urns), with words drawn independently according to a regularly varying distribution with parameter \(\theta \in (0,1)\); successive words in the text are balls. The number of distinct words corresponds to the number of non-empty urns. Abebe et al.~\cite{Abebe} showed that this model fits the real texts well. 

The process of interest is \(R_n(t) := R_{n[0,t],1}^*\) for \(t \in [0,1]\), which corresponds to Theorem~\ref{FCLT} with \(d=1\). The standardized and centered version converges weakly to a Gaussian process \(Z(t)\) with covariance from Corollary~\ref{K*}
\[
K(s,t) := \mathbb{E} Z(s) Z(t) = (s+t)^\theta-(s \vee t)^\theta, \quad s,t \in [0,1].
\] 

For concatenated texts (e.g., from different authors or LLM prompts with varying instructions), the process deviates from the model. The unknown mean \(\mathbb{E} R_n(t)\) hinders diagnostics, but Abebe et al.~\cite{Abebe} proposed the backward process \(R_n'(t) := R_{n[1-t,1],1}^*\), counting distinct words from the end.

Under \eqref{reg}, the self-normalized centered vector process
\[
\left\{ \left( \frac{R_n(t) - \mathbb{E} R_n(t)}{\sqrt{M(n)}}, \frac{R_n'(t) - \mathbb{E} R_n'(t)}{\sqrt{M(n)}} \right) : t \in [0,1] \right\}
\]
converges weakly in \(D[0,1]^2\) in uniform metric to a centered 2-dimensional Gaussian process \((Z, Z')\) with \(\mathbb{E} Z'(s) Z'(t) = K(s,t)\) and cross-covariance
\[
K'(s,t) := \mathbb{E} Z(s) Z'(t) = \bigl( (s+t)^\theta - 1 \bigr) \mathbf{1}(s+t > 1).
\]  This recovers \cite[Theorem 1]{Abebe} as a corollary of Theorem~\ref{FCLT} (\(d=2\), four endpoints). Abebe et al.~\cite{Abebe} applied continuous functionals of \((R_n(t), R_n'(t))\) to test text homogeneity. 

A limitation of the above method arises when solving the problem of embedded foreign text detection, where the counting must start from an unknown embedding point. We propose a circular text statistic: for each starting position 
\(\lfloor ns \rfloor\) (\(s \in [0,1)\)), we count the number of distinct words forward to \(\lfloor n(s+t) \rfloor \mod n\) (\(t > 0\)) minus the number of words backward to \(\lfloor n(s-t) \rfloor \mod n\).  For \(s,t \in [0,1]\), define the circular arcs
\begin{align*}
A(s,t) &:= 
\begin{cases} 
[s, s+t], & s+t \leq 1; \\ 
[s,1] \cup [0, s+t-1], & s+t > 1;
\end{cases} \\
B(s,t) &:= 
\begin{cases} 
[s-t, s], & s-t \geq 0; \\ 
[0,s] \cup [s-t+1, 1], & s-t < 0.
\end{cases}
\end{align*}
The difference statistic is \(U_n(s,t) := R_{n A(s,t),1}^* - R_{n B(s,t),1}^*\).

By Theorem~\ref{FCLT} (\(d=2\), \(m=2\), \(k_1 = k_2 = 1\)), the self-normalized field
\(
\{
{U_n(s,t)}/{\sqrt{R_n}}, \quad s,t \in [0,1]
\}
\)
converges weakly in the uniform metric on \([0,1]^2\) to a centered Gaussian random field with \(\theta\)-dependent covariance. Continuous functionals thereof, such as
\[
\frac{1}{R_n} \int_0^1 \int_0^1 U_n(s,t)^2 \, ds \, dt,
\]
provide tests for text homogeneity under the model. 

\section*{Conclusion}

Discrete outer random measures from the infinite urn scheme substantially generalize classical finite-dimensional occupancy statistics like the number of non-empty urns. Their restrictions to Borel measurable or \(\mathcal{B}^{\mathrm{fin}}[0,1]\) sets yield random fields amenable to limit theorems. In particular, the functional central limit theorem (Theorem~\ref{FCLT}) enables precise goodness-of-fit tests for data against the infinite urn model, with applications to text analysis and beyond.

\section*{Declarations}

\begin{itemize}
\item The work of the third author is supported by the Mathematical Center in Akademgorodok under the agreement No. 075-15-2025-348 with the Ministry of Science and Higher Education of the Russian Federation.
\item Conflict of interest/Competing interests (check journal-specific guidelines for which heading to use): no.
\item Ethics approval and consent to participate: yes.
\item Consent for publication: yes.
\item Data availability: not applicable. 
\item Materials availability: not applicable.
\item Code availability: not applicable. 
\item Author contribution: Artyom Kovalevskii is the author of the general idea of the study. Berhane Abebe is the author of the proofs of Lemmas \ref{L1}, \ref{L2}, \ref{L3}, 
\ref{R_fin}, \ref{L_SLLN},  
Proposition~\ref{corr}, Theorem \ref{CLT}, Corollaries 
\ref{K*}, \ref{KAA}, \ref{CLT1}, \ref{Q}. Mikhail Chebunin is the author of the proofs of Lemmas \ref{L_F_1}, \ref{L_F_2}, \ref{L_F_3}, \ref{L_F_4}, \ref{L_SLLN_2}, Theorems \ref{FCLT} and  \ref{T_SLLN}, Corollaries 
\ref{C_F_1}, \ref{C_F_2}.
\end{itemize}

\bibliography{sn-bibliography}

\end{document}